\newtheorem{theorem}{Theorem}[section]
\newtheorem{lemma}[theorem]{Lemma}
\theoremstyle{definition}
\newtheorem{definition}{Definition}[section]
\theoremstyle{proposition}
\newtheorem*{proposition*}{Proposition}
\def\E{{\mathbb E}}
\def\N{{\mathbb N}}
\def\P{{\mathbb P}}
\def\R{{\mathbb R}}
\def\J{{\mathfrak J}}
\def\O{{\mathcal O}}
\def\cK{{\mathrm K}}
\def\V{{\mathrm{Var}}}
\def\l{\ell}
\def\f{f}
\def\g{g}
\def\h{h}
\def\dis{\displaystyle}
\newcommand{\com}{,}
\newcommand{\ncomma}{, \ }
\newcommand{\ztp}{(0,2 \pi)}
\newcommand{\zp}{(0,\pi)}
\newcommand{\zpt}{(0,\pi/2)}
\newcommand{\Envtwo}{\E [ N_{n} \ztp ]}
\newcommand{\asntoinfty}{\quad\mbox{as } n\to\infty}
\newcommand{\zk}{\zeta_{k}}
\newcommand{\ak}{\alpha_{k}}
\newcommand{\bk}{\beta_{k}}
\newcommand{\Dk}{\Delta _{(k,n)} (t)}
\newcommand{\Dkak}{\Delta _{(k,n)} (\alpha_{k})}
\newcommand{\ul}{u_{\l}}
\newcommand{\etl}{\omega_\l}
\newcommand{\abs}[1]{\left\lvert #1 \right\rvert}
\newcommand{\ceil}[1]{\left\lceil #1 \right\rceil}
\begin{document}

\title[Random cosine polynomials with palindromic blocks]{Real zeros of random cosine polynomials with palindromic blocks of coefficients}

%    Information for first author
\author{Ali Pirhadi}
%    Address of record for the research reported here
\address{Department of Mathematics, Oklahoma State University, Stillwater, OK 74078, USA}
%    Current address
%    \curraddr{Department of Mathematics and Statistics, Case Western Reserve University, Cleveland, Ohio 43403}
%
\email{pirhadi@math.okstate.edu}
%   \thanks will become a 1st page footnote.
%   \thanks{The author would like to thank the Vaughn Foundation, via Anthony Kable, for financial support.}

%    Information for second author
%	\author{B}
%	\address{Department of Mathematics, Oklahoma State University, Stillwater, Oklahoma 74078}
%	\email{B@okstate.edu}
%	\thanks{Support information for the second author.}

%    General info
%	\subjclass[2010]{Primary 54C40, 14E20; Secondary 46E25, 20C20}

%	\date{January 1, 2001 and, in revised form, June 22, 2001.}

%	\dedicatory{This paper is dedicated to our advisors.}

\keywords{Random cosine polynomials, dependent coefficients, expected number of real zeros, palindromic blocks}

\subjclass[2010]{30C15, 26C10}

\begin{abstract}
It is well known that a random cosine polynomial $ V_n(x) = \sum_ {j=0} ^{n} a_j \cos (j x) \ncomma x \in \ztp $, with the coefficients being independent and identically distributed (i.i.d.) real-valued standard Gaussian random variables (asymptotically) has $ 2n / \sqrt{3} $ expected real roots. On the other hand, out of many ways to construct a dependent random polynomial, one is to force the coefficients to be palindromic. Hence, it makes sense to ask how many real zeros a random cosine polynomial (of degree $ n $) with identically and normally distributed coefficients possesses if the coefficients are sorted in palindromic blocks of a fixed length $ \l. $
In this paper, we show that the asymptotics of the expected number of real roots of such a polynomial is $ \cK_\l \cdot 2n / \sqrt{3} $, where the constant $ \cK_\l$ (depending only on $ \l $) is greater than 1, and can be explicitly represented by a double integral formula. That is to say, such polynomials have slightly more expected real zeros compared with the classical case with i.i.d. coefficients.
\end{abstract}

\maketitle

%	\section*{This is an unnumbered first-level section head}
%	This is an example of an unnumbered first-level heading.

%% The correct journal style for \specialsection is all uppercase; a known bug
%% in amsart.cls prevents this, so input must be uppercase until it is fixed.
%\specialsection*{This is a Special Section Head}
%	\specialsection*{THIS IS A SPECIAL SECTION HEAD}
%This is an example of a special section head%
%%%%%%%%%%%%%%%%%%%%%%%%%%%%%%%%%%%%%%%%%%%%%%%%%%%%%%%%%%%%%%%%%%%%%%%%
%	\footnote{Here is an example of a footnote. Notice that this footnote
%	text is running on so that it can stand as an example of how a footnote with separate paragraphs should be written.
%\par
%	And here is the beginning of the second paragraph.}%
%%%%%%%%%%%%%%%%%%%%%%%%%%%%%%%%%%%%%%%%%%%%%%%%%%%%%%%%%%%%%%%%%%%%%%%%

\section{Background}
Let $ I \subset \R $ be an interval, $\{f_j\}_{j=0}^n$ a sequence of real-valued functions on $ I $, and $\{a_j\}_{j=0}^n$ a sequence of random variables defined on the same probability space.
A real random function $ F_n: I \to \R $ is defined as the linear combination 
$
F_n (x) := \sum _{j=0} ^{n} a_j f_{j}(x) .
$
Among all famous random functions, of particular curiosity is the algebraic polynomial $ P_n $ which is simply determined as the linear combination of the monomials $ f_j(x)=x^j $, and is most-studied since it is easier to deal with than other types. The history of studying the real zeros of random (algebraic) polynomials dates back to the early 1930s with the pioneering works of Bloch and P\'olya \cite{BP}, Schur \cite{Sc} and Szeg\H{o} \cite{Sz}.
The first work in this area was actually done by Bloch and P\'olya \cite{BP} in 1932 where they proved that if the coefficients $a_j$ are i.i.d. random variables with $a_0=1$ almost surely (i.e. $\P(a_0=1)=1$) and all other coefficients are chosen uniformly from the set $\{-1,0,1\}$, then  $ \E[N_n(\R)]=\O \! \left(\sqrt{n}\right) $ as $ n \to \infty $, where $\E$ is the mathematical expectation, and $N_n(\R)$ denotes the number of real zeros of $P_n$ in $\R$. 
Later, in a series of papers (e.g. \cite{LO1} and \cite{LO2}) Littlewood and Offord provided lower and upper bounds for $N_n(\R)$ of a random polynomial with coefficients being uniformly distributed in the set $ \{ -1, 1 \} $ or in the interval $ ( -1, 1 ) $. Their results were later developed and refined by Erd\H{o}s and Offord \cite{EO} and others.
In 1943, Kac \cite{Kac1} showed that the asymptotics of the expected number of real zeros of polynomials with i.i.d. normal coefficients is $ (2/\pi + o(1) )\log n,$ which subsequently was studied and enhanced by other mathematicians, including Wang \cite{Wa}, Ibragimov and Maslova \cite{IM1}-\cite{IM2}, Edelman and Kostlan \cite{EK}, and Wilkins \cite{W1}.

An essential tool in study of the expected real zeros of a random polynomial with normal distribution is a celebrated formula known as Kac-Rice's Formula, first introduced by Kac \cite{Kac2} through an explicit integral formula. Despite the several forms of the formula, which are more or less equivalent, in this paper we quote and use the version indicated in the work of Lubinsky, Pritsker and Xie \cite{LPX}.
\begin{proposition*}[Kac-Rice Formula] %\label{Prop4.1.1}
Let $ [a, b] \subset \R ,$ and consider real valued functions $ f_{j}(x) \in C^1 [(a, b)] $,  $j = 0, \ldots , n $. Define the random function
$ 	F_n (x) := \sum _{j=0} ^{n} a_{j} f_{j}(x),  $
where the coefficients $a_j$ are i.i.d. random variables with Gaussian distribution $ \mathcal{N}(0,\sigma^{2}) $ and  $ A(x) := \sum _{j=0} ^{n} \left(f_{j} (x)\right)^2 $, $ B(x) := \sum _{j=0} ^{n} f_j (x) f'_{j} (x) $ and $ C(x) := \sum _{j=0} ^{n} \left(f'_{j} (x)\right)^2. $
If $ A(x)>0 $ on $ (a,b) ,$ and there is $ M \in \N $ such that $ F'_{n}(x) $
has at most $ M $ zeros in $ (a, b) $ for all choices of
coefficients, then the expected number of real zeros of $ F_n(x) $ in the interval $ (a, b) $ is given by
\begin{equation}  \label{Kac}
\E [ N_{ n }  ( a , b ) ] 
= \frac{1}{\pi} \displaystyle \int_{a}^{b}  \dfrac{ \sqrt{A(x)C(x)-B^2(x)}  }{A(x)} \, dx. \tag{\(\ast\)}
\end{equation}
\end{proposition*}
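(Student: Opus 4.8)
The plan is to exploit the Gaussianity of the coefficients, which forces the pair $(F_n(x), F_n'(x))$ to be a centered jointly Gaussian vector in $\R^2$ for each fixed $x \in (a,b)$. The first step is to record its covariance structure. Since the $a_j$ are i.i.d. $\mathcal{N}(0,\sigma^2)$, differentiating $F_n$ termwise and computing second moments gives $\E[F_n(x)^2] = \sigma^2 A(x)$, $\E[F_n(x) F_n'(x)] = \sigma^2 B(x)$, and $\E[F_n'(x)^2] = \sigma^2 C(x)$, so the covariance matrix is $\sigma^2 \bigl(\begin{smallmatrix} A & B \\ B & C \end{smallmatrix}\bigr)$. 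The hypothesis $A(x) > 0$ guarantees that $F_n(x)$ is nondegenerate and hence has a density, and Cauchy--Schwarz gives $A(x)C(x) - B^2(x) \ge 0$ throughout $(a,b)$.

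The second step is to represent the zero count analytically. Since $F_n \in C^1$ and, outside a null event, has only simple zeros in $(a,b)$, a change-of-variables (co-area) argument yields the pointwise identity
\begin{equation*}
N_n(a,b) = \lim_{\eps \to 0^+} \frac{1}{2\eps} \int_a^b \mathbf{1}_{\{\abs{F_n(x)} \le \eps\}} \, \abs{F_n'(x)} \, dx ,
\end{equation*}
which one may also write formally as $\int_a^b \delta(F_n(x)) \abs{F_n'(x)}\,dx$. I would then take expectations and pass $\E$ through the limit and the integral. This is the step requiring genuine care: the uniform bound $N_n(a,b) \le M$ from the hypotheses, together with continuity of the joint density in its first argument, is exactly what lets me invoke dominated convergence and Fubini's theorem to obtain
\begin{equation*}
\E[N_n(a,b)] = \int_a^b \left( \int_{\R} \abs{z} \, p_x(0,z) \, dz \right) dx ,
\end{equation*}
where $p_x$ denotes the joint density of $(F_n(x), F_n'(x))$.

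The final step is a short Gaussian computation of the inner integral. Factoring $p_x(0,z) = p_{F_n(x)}(0)\, p_{F_n'(x)\mid F_n(x)=0}(z)$, the marginal density at the origin is $p_{F_n(x)}(0) = \bigl(2\pi\sigma^2 A(x)\bigr)^{-1/2}$, while the conditional law of $F_n'(x)$ given $F_n(x)=0$ is centered Gaussian with variance $\sigma^2\bigl(A(x)C(x)-B^2(x)\bigr)/A(x)$. Applying the moment identity $\E\abs{W} = \sqrt{2/\pi}\,s$ for $W \sim \mathcal{N}(0,s^2)$, the inner integral collapses to $\tfrac{1}{\pi}\sqrt{A(x)C(x)-B^2(x)}/A(x)$; notably the factors of $\sigma$ cancel exactly, explaining why the answer is distribution-free in the scale. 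Integrating over $(a,b)$ then produces the desired formula \eqref{Kac}.

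I expect the only real obstacle to be the rigorous justification of the interchange in the second step, i.e.\ moving the expectation inside the singular limit. The regularity $f_j \in C^1$, the positivity $A(x) > 0$, and the uniform zero bound $M$ are precisely the hypotheses that tame this passage; once it is secured, the remainder is a routine conditional-Gaussian moment calculation.
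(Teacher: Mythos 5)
The paper does not prove this proposition: it is quoted verbatim from Lubinsky, Pritsker and Xie \cite{LPX} and used as a black box, so there is no in-paper argument to compare yours against. Judged on its own, your outline is the standard proof of the Kac--Rice formula (essentially Kac's original argument as presented in \cite{LPX}) and is correct in structure: the covariance computation, the factorization of the joint density, and the conditional-Gaussian moment calculation in the last step are all right, and the $\sigma$-cancellation is exactly as you say.

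Two points need to be firmed up before this is a complete proof. First, you assert that "outside a null event" $F_n$ has only simple zeros in $(a,b)$; this is what legitimizes the counting identity $N_n(a,b)=\lim_{\eps\to0^+}\tfrac{1}{2\eps}\int_a^b \mathbf{1}_{\{\abs{F_n}\le\eps\}}\abs{F_n'}\,dx$, and it is not automatic from $A(x)>0$ alone (that only gives nondegeneracy of $F_n(x)$ at each fixed $x$). It requires a separate argument, e.g.\ that the set of coefficient vectors producing a common zero of $F_n$ and $F_n'$ has Lebesgue measure zero, or an appeal to Bulinskaya-type lemmas. Second, the dominating bound for the interchange is not literally "$N_n(a,b)\le M$": the hypothesis bounds the zeros of $F_n'$, not of $F_n$. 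The correct route is to split $(a,b)$ into the at most $M+1$ intervals of monotonicity of $F_n$; on each such interval the contribution to $\int \mathbf{1}_{\{\abs{F_n}\le\eps\}}\abs{F_n'}\,dx$ is at most $2\eps$, so the $\eps$-approximant is uniformly bounded by $M+1$ for every $\eps>0$ and every choice of coefficients, which is what dominated convergence actually needs. With these two repairs the argument is complete.
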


Expected real zeros of a random cosine polynomial 
\begin{equation*}
V_n(x) := \sum_{j=0}^{n} a_j \cos(j x) \ncomma x \in \ztp , 
\end{equation*}
with i.i.d. standard Gaussian coefficients was originally considered by Dunnage \cite{Dun} in 1966. 
He proved the following asymptotic result
\begin{equation*} 
\Envtwo =\frac{2n}{\sqrt{3}} + \O \! \left( n^{ 11/13} (\log n )^{3/13}\right) \quad\mbox{as } n \to\infty ,
\end{equation*}
which may be of course attained by setting $ f_{j} (x) = \cos (j x) $ and $ (a,b) =(0,2\pi) $ in the proposition above. Further results refining that estimate and sharpening its error term were obtained by Das \cite{D}, and the compelling work of Wilkins \cite{W2}, known as the best estimate up to this time, where he showed
\begin{equation*}
\E \left[ N_{n} ( 0,2 \pi ) \right] = \frac{2n+1}{ \sqrt{3}} 
\sum_{r=0}^{3} \dfrac{D_r}{(2n+1)^r}+ \O \! \left((2n+1)^{-3}\right) \asntoinfty
\end{equation*} 
with $ D_0 =1 $ and the other coefficients being explicitly computed. 
A moderately different type of a random trigonometric polynomial is called Qualls ensemble
\begin{equation*}
X_n(x) :=\frac{1}{\sqrt{n}}\sum _{j=1} ^{n}\left( a_{j} \cos (j x) + b_{j} \sin (j x)\right) \ncomma x \in \ztp .
\end{equation*}
Similar to the above results, Qualls \cite{Q} showed that the expected number of real zeros of the polynomial $ X_n $ is asymptotically $ 2n/\sqrt{3} $, and more importantly, he proved that
\begin{equation*}
\abs{N_{n} ( 0,2 \pi )-\E \left[ N_{n} ( 0,2 \pi ) \right]}\leqslant C n^{3/4}
\end{equation*}
for a large enough $ n $ and some positive constant $ C $--which is basically a huge improvement in the results obtained by Farahmand (cf. \cite{Far1-1}-\cite{Far1-2}).   
For the variance of the number of real roots of $ X_n $, the asymptotic form $ \V\left(N_{n} ( 0,2 \pi )\right)\sim cn $ (with $ c>0  $ being a constant) was conjectured by Bogomolny, Bohigas and Leboeuf \cite{BBL}, which was later proved by Granville and Wigman \cite{GW}. 
More on the history of the subject as well as many additional references and further directions of work on the variance and expected number of real zeros may be found in the books of Bharucha-Reid and Sambandham \cite{BS} and of Farahmand \cite{Far1-3}.
We now consider a family of identically and normally distributed coefficients $ (a_j)_{j=0}^n $ with zero mean and unit variance, and let $ \rho_{ij}=\E (a_i a_j ) $, $ i,j = 0, \cdots n $, be the correlation coefficients (also known as the moment matrix). It is trivial that $ [\rho_{ij}] $ is an $ (n+1)\times(n+1) $  identity matrix provided that $ (a_j)_{j=0}^n $ is a family of independent random variables.
Two cases of random cosine polynomials with dependent coefficients employing nontrivial correlation coefficients were established in the works of Sambandham \cite{Sam}, and Renganathan and Sambandham \cite{RS2}. The authors explained the cases of constant and geometric correlations 
$ \rho_{ij} = \rho $
and 
$ 
\rho_{ij} = \rho ^{  \abs{i - j} } ,
$ 
$ \rho \in ( 0, 1 ), $
$ i \ne j $, 
and showed in both cases the asymptotics of the mean number of real zeros remains as $  2n / \sqrt{3}. $
More recently Angst, Dalmao and Poly \cite{ADP} investigated the expected real zeros of a random trigonometric polynomial with a new correlation function $ \rho .$ They showed that, under mild assumptions of a spectral function $ \psi_\rho $, the asymptotic estimate $ \E \left[ N_{n} ( 0,2 \pi ) \right] = 2n/\sqrt{3}+o(n) $ holds as $ n $ grows to infinity. 
Compared with the exemplary case of a random cosine polynomial with i.i.d. Gaussian coefficients, to the best of our knowledge, the first result (in the field of random cosine polynomials with dependent coefficients) to possess more expected real roots appears in the work Farahmand and Li \cite{FL}.
The authors gave an asymptotic estimate for the expected number of real zeros of polynomial $ V_n $ having \emph{palindromic} properties 
$ 
a_{j} = a_{ n - j } $, and showed that $ \Envtwo 
\sim n / \sqrt{3} .$ 
Note that this result should be modified as 
\begin{equation} \label{a1}
\Envtwo 
= n + \dfrac{n}{\sqrt{3}} + \O \! \left( n^{3/4} \right) \asntoinfty 
\end{equation}
since we require taking $ n $ deterministic real zeros of $ V_n $ into account  (for further explanation, see comments on (1.1) of \cite{P}). The class of palindromic coefficients is just a special case of coefficients with pairwise equal blocks (defined in Section \ref{Sec2}) in view of the fact that any coefficient may be considered as a block of coefficients of unit length. Two random cosine polynomials with pairwise equal blocks of coefficients were studied in \cite{P}, and it turned out that in one case, the asymptotic expected real zeros remains the same as those of the classical case, whereas in the second, significantly more real zeros are expected.

In a few words, those results reveal that $ 2n / \sqrt{3} $ is the lower bound for the asymptotic expected real roots of a random cosine polynomial in the event that independence of the coefficients is removed. 
Besides investigating another case of dependent random cosine polynomials that endorses the above lower bound criterion, the main goal of this paper is to generalize and to formulate the asymptotics obtained by Farahmand and Li \cite{FL} if palindromic \emph{blocks} of coefficients are of interest in place of palindromic coefficients.
The plan of the article is the following: in Section \ref{Sec2}, we introduce  the class of coefficients with pairwise equal blocks in general, and the class of palindromic blocks of coefficients in particular, and proceed then with the main theorem, whose proof is given in Section \ref{Sec4}.

\section{Random cosine polynomials with palindromic blocks of coefficients} \label{Sec2}
Unlike a random cosine polynomial with palindromic coefficients, $ a_j=a_{n-j} $, whose asymptotic expected real zeros exceeds $ 2n / \sqrt{3} $, the asymptotics remains the same for a cosine polynomial with pairwise equal coefficients satisfying $ a_{2j}=a_{2j+1}$. 
Since any coefficient $ a_j $ may be considered as $ A_j= (a_j) $, or simply a block of coefficients of unit length, one can intuitively think of the expected real zeros of such polynomials while the coefficients are sorted in blocks of length $ \l $ instead of those of unit length. 
As anticipated, it turns out cosine polynomials with \emph{contiguous} blocks, $ A_{2j}=A_{2j+1}$, of length $ \l $ have the same asymptotics as those with i.i.d. coefficients (see Theorem 2.1 of \cite{P}). 
In this article we study the asymptotic expected real roots of a random cosine polynomial employing palindromic blocks of length $ \l $ in order to ascertain whether the number of real zeros increases, decreases or remains the same.

\begin{definition}
An $ n $-tuple $ \left(a_{i}, a_{i+1}, \ldots , a_{i + \l -1}\right) $ is called a block of coefficients of length $ \l $.
\end{definition}
Fix a positive integer $ \l $, and let $ A= \left(a_{0}, a_{1}, \ldots , a_{n}\right) \ne (0,0, \ldots,0) ,$ 
where $ n=2 \l m +r \ncomma m \in \N $, and $ r \in \{-1, \ldots , 2 \l -2 \}. $
We then establish $ \{ A_j \}_{j=0}^{2m-1} $, a sequence of $ 2m $ blocks of coefficients of length $ \l $, and the set of the remaining coefficients
$ \tilde{A} = A \setminus \bigcup_{j} A_j $
which contains $ r+1 $ elements while emphasizing that $ \tilde{A} = \emptyset $ if $ r=-1 $.
\begin{definition}
With above assumptions, suppose for each $ A_j  $ there exists a unique $ j' \neq j $ with $ A_{j} = A_{j'} ,$ and define
$ \J = \{ j=0 , \ldots , 2m-1 : A_{j} = A_{j'} \ \text{and} \ j < j' \}. $
If $ \bigcup_{j \in \J } A_j \cup \tilde{A} $ is a family of i.i.d. random variables, then $ A= \left(a_{0}, a_{1}, \ldots , a_{n}\right) $ is called a class of coefficients with pairwise equal blocks (of length $ \l $).
\end{definition}
\begin{definition}
Suppose $ A= \left(a_{0}, a_{1}, \ldots , a_{n}\right) $ is a class of coefficients with pairwise equal blocks of length $ \l $ with $ \J = \{ 0 , \ldots , m-1 \} $ and $ A_j= \left(a_{\l j}, a_{\l j+1}, \ldots , a_{\l j + \l -1}\right) \ncomma j \in \J. $
Then $ A= \left(a_{0}, a_{1}, \ldots , a_{n}\right) $ is a class of coefficients with palindromic blocks if the sequence of blocks $ A_0, \ldots, A_{m-1}, \tilde{A}, A_{m}, \ldots, A_{2m-1} $ is palindromic, i.e., $ A_j = A_{2m-1-j} \ncomma j \in \J $. Note that as $ r $ varies, we obtain different $ A_{m}, \ldots, A_{2m-1} $ and $ \tilde{A} $. 
\end{definition}
As a direct consequence of \eqref{a1}, it turns out that a random cosine polynomial with palindromic and normally distributed blocks of unit length, on average, has almost $ 36.6\% $ more real zeros than that of the classical case with independent coefficients.
The reason that the asymptotics in this case ($ \l=1 $) is significantly more than $ 2n/\sqrt{3} $ is that more than half of all real zeros are deterministic. As we expect, the longer a block is, the smaller number of real roots should be expected in contrast to \eqref{a1}.
For instance, a random cosine polynomial with palindromic blocks of lengths $ 2 $ and $ 3 $ respectively has only $ 6.42\% $ and $ 4.08\% $ more expected real roots than the classical case with i.i.d. coefficients. 

Herein we discuss the asymptotic expected real root of a random cosine $ V_n(x) = \sum _{j=0} ^{n} a_j \cos (j x) ,$ where $ A= \left(a_{0}, a_{1}, \ldots , a_{n}\right) $ is a class of coefficients with palindromic blocks of length $ \l $. The following theorem shows that in this case $ \Envtwo $ and $ 2n/\sqrt{3} $ are proportionally related by a constant depending only on $ \l $ and represented by a double integral formula. 

\subpdfbookmark{Theorem 2.1}{Thm2.1}
\begin{theorem} \label{Thm2.1}
Fix $ \l \in \mathbb{N} $, and let $ n= 2\l m +r \ncomma m \in \N ,$ 
and $ r \in \{-1, \ldots , 2 \l -2 \} .$
Assume $ V_n(x) = \sum _{j=0} ^{n} a_j \cos (j x) \ncomma x \in \ztp ,$
and $ \bigcup_{j=0}^{m-1} A_{j} \cup \tilde{A} $ is a family of i.i.d. random variables with Gaussian distribution $\mathcal{N}(0, \sigma^2) .$ For $ j=0 , \ldots , m-1 $ and $ k=0 , \ldots , \l-1 ,$ we further assume $  a_{\l j+k} = a_{\l (2m-1-j)+r+1+k} ,$ i.e., $ A_j = A_{2m-1-j}.$ Then
\begin{equation*}  %\label{b3}
\Envtwo
=  \frac{2 n}{ \sqrt{3}} \ \cK_{\l} + \O  \! \left(n^{2/3}\right)  \asntoinfty \com
\end{equation*} 
where 
\begin{align*} 
& \cK_{\l} \notag
: = \frac{1}{\pi ^2} \displaystyle \int_{ 0 }^{ \pi } \int_{ 0 }^{ \pi /2 }
\sqrt{1+  \dfrac{3 \left(1-\ul^2(s)\right) }{  \left(1+  \ul(s) \cos( t )\right) ^2 }} \, ds \, dt ,
\end{align*} 
and 
$ \ul(s) := \dfrac{\sin (\l s)}{\l \sin(s)}. $
\end{theorem}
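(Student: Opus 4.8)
The plan is to apply the Kac--Rice formula $(\ast)$ after re-expressing $V_n$ in terms of its genuinely independent coefficients. Using the palindromic relations $a_{\l j+k}=a_{\l(2m-1-j)+r+1+k}$, I would collect the two occurrences of each independent coefficient and write
\[
V_n(x)=\sum_{j=0}^{m-1}\sum_{k=0}^{\l-1}a_{\l j+k}\,g_{j,k}(x)+\sum_{p=0}^{r}a_{\l m+p}\cos\!\big((\l m+p)x\big),
\]
where $g_{j,k}(x)=\cos\!\big((\l j+k)x\big)+\cos\!\big((\l(2m-1-j)+r+1+k)x\big)$ pairs a low frequency $\omega_1=\l j+k$ with a high frequency $\omega_2=n-\l(j+1)+1+k$. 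Since the coefficients that survive here are i.i.d.\ $\mathcal N(0,\sigma^2)$, the formula $(\ast)$ applies with the role of the $f_j$ played by the $g_{j,k}$ together with the bounded-length tilde block (empty when $r=-1$), giving $\Envtwo=\frac1\pi\int_{0}^{2\pi}\sqrt{A(x)C(x)-B^2(x)}/A(x)\,dx$, where $A,B,C$ are the three quadratic sums and the value is independent of $\sigma$.

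The key computational step is to split each of $A,B,C$ into a \emph{diagonal} and a \emph{cross} part. Squaring $g_{j,k}$ and its derivative and summing, the diagonal part reproduces \emph{exactly} the i.i.d.\ sums $\sum_{\nu=0}^{n}\cos^2(\nu x)$, $\sum_{\nu=0}^{n}\nu^2\sin^2(\nu x)$, and the corresponding cross sum for $B$, because $\{\omega_1\}$, the tilde frequencies, and $\{\omega_2\}$ partition $\{0,\dots,n\}$. The cross part is controlled by the sum and difference frequencies $P=\omega_1+\omega_2=n-\l+1+2k$ and $Q=\omega_2-\omega_1=n+1-\l(2j+1)$. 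Here the identity $\sum_{k=0}^{\l-1}\cos(Px)=\l\,\ul(x)\cos(nx)$, which is exactly where $\ul$ enters, shows that the sum-frequency pieces contribute at top order, whereas the difference-frequency pieces are Dirichlet-type sums $\propto\sin(m\l x)/\sin(\l x)$ concentrated in $\O(1/n)$-neighborhoods of $x=k\pi/\l$ and lower order elsewhere. Carrying this out, I expect the leading forms
\[
A(x)\sim\tfrac{n}{2}\big(1+\ul(x)\cos nx\big),\quad B(x)\sim-\tfrac{n^2}{4}\ul(x)\sin nx,\quad C(x)\sim\tfrac{n^3}{12}\big(2-\ul(x)\cos nx\big).
\]

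Forming $A C-B^2$ then hinges on the algebraic identity $4+2u\cos\theta-2u^2-u^2\sin^2\theta=(1+u\cos\theta)^2+3(1-u^2)$, which collapses the leading term to
\[
\frac{\sqrt{A(x)C(x)-B^2(x)}}{A(x)}\sim\frac{n}{2\sqrt3}\sqrt{1+\frac{3\left(1-\ul^2(x)\right)}{\left(1+\ul(x)\cos nx\right)^2}}.
\]
I would evaluate $\frac1\pi\int_{0}^{2\pi}$ of this by a two-scale (equidistribution) argument: $\ul(x)$ is slowly varying while the phase $\theta=nx$ sweeps out its full period, so the fast phase may be replaced by its average $\frac1{2\pi}\int_{0}^{2\pi}(\cdot)\,d\theta$. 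Using that the integrand depends on $\theta$ only through $\cos\theta$ and that $\ul(2\pi-x)=\ul(x)$, $\ul(\pi-x)=\pm\ul(x)$, the domain folds down to $x\in\zpt$, $\theta\in\zp$, and renaming $(x,\theta)=(s,t)$ yields precisely $\tfrac{2n}{\sqrt3}\,\cK_\l$.

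The main obstacle is the rigorous error analysis. The limiting integrand is singular at the corners where $\ul(x)\to\pm1$ (that is, $x\to0,\pi,2\pi$) and $\cos nx\to\mp1$, where $1+\ul(x)\cos nx$ can vanish; moreover near $x=k\pi/\l$ the discarded Dirichlet-kernel cross terms are themselves of order $n^3$, albeit on sets of measure $\O(1/n)$. I would excise $\delta$-neighborhoods of all these exceptional points, bound the Kac--Rice integrand crudely by $\O(n)$ there (contributing $\O(n\delta)$), and on the complement control both the replacement of $A,B,C$ by their leading forms and the equidistribution error through integration by parts in the fast phase, the latter producing bounds with negative powers of $\delta$; balancing these against the $\O(n\delta)$ boundary contribution is what yields the stated $\O(n^{2/3})$ term. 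Checking that $A(x)>0$ on $\ztp$ (so that $(\ast)$ applies) and that $V_n'$ has at most $\O(n)$ zeros are additional, more routine, points.
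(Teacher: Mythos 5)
Your proposal follows essentially the same route as the paper: the same pairing of low and high frequencies via the palindromic relation, the same diagonal/cross decomposition yielding the leading forms $A\sim\frac n2(1+\ul\cos nx)$, $B\sim-\frac{n^2}4\ul\sin nx$, $C\sim\frac{n^3}{12}(2-\ul\cos nx)$ on the complement of small neighborhoods of the points $i\pi/\l$, the same algebraic collapse of $AC-B^2$, and the same two-scale evaluation of the resulting oscillatory integral (the paper implements your ``average out the fast phase'' step as a Riemann sum whose error is controlled by integrating the sawtooth function against $d\g_t$, which is precisely the integration by parts you describe) with the identical balancing of the excision error $\O(n^{1-a})$ against the equidistribution error to land on $a=1/3$ and the stated $\O(n^{2/3})$.
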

As a quick remark, it is worth mentioning that when $ \l =1 $ (and consequently $ \cK_{1}=1/2 $), we arrive at a random cosine polynomial with palindromic coefficients as in the work of Farahmand and Li (see \eqref{a1} and \cite{FL}). 
For a fixed $ \l \geqslant 2 $, Markov's inequality, which appears as Theorem 15.1.4 in \cite{RS}, gives that $ \abs{\ul(x)} \leqslant 1 $ on $ [0,\pi] $, and $ \abs{\ul(x)} = 1 $ only at the endpoints.
Hence, \cite[3.613(1) on p. 366]{GR} implies that
\begin{align*} 
& \dfrac{1}{\pi} \displaystyle \int_{ 0 }^{ \pi }
\dfrac{\sqrt{1-\ul^2(s)} \, dt}{1+  \ul(s) \cos( t )} 
= 1 \ncomma s\in(0,\pi).
\end{align*} 
Let us call the integrand in the above expression $ f_s $, and define $ \varphi(y) := \sqrt{1+3y^2} . $ We note that $ \varphi $ is a strictly convex function on $ [0,\infty) ,$ which implies that for any nonconstant $ f \in \mathbb{L}^1  ((0,\pi))$, Jensen's inequality is strict (see Theorem 3.3 of \cite{R}), i.e.,
\begin{align*} 
& \varphi \left(\displaystyle \dfrac{1}{\pi} \int_{ 0 }^{ \pi } f(t) \, dt \right) < \dfrac{1}{\pi} \int_{ 0 }^{ \pi } \varphi (f(t)) \, dt.
\end{align*} 
Therefore, applying the Fubini-Tunelli Theorem, we get
\begin{align*} 
1
& = \frac{1}{\pi ^2} \displaystyle \int_{ 0 }^{ \pi /2 }
\pi \, \varphi \left(\displaystyle \dfrac{1}{\pi} \int_{ 0 }^{ \pi } f_s(t) \, dt \right) ds
< 
\frac{1}{\pi ^2} \displaystyle \int_{ 0 }^{ \pi /2 }\int_{ 0 }^{ \pi } 
\varphi (f_s(t)) \, dt \, ds = \cK_{\l} \\
& \leqslant \frac{1}{\pi ^2} \displaystyle \int_{ 0 }^{ \pi /2 }\int_{ 0 }^{ \pi } 
\left(1+  \sqrt{3}\f_s(t)\right) dt \, ds 
= \frac{1+\sqrt{3}}{2}.
\end{align*} 
Shown in Table \ref{table1} are the numerical evaluations of $ \cK_{\l} $ for some values of $ \l. $
\begin{table}[h]
\renewcommand{\arraystretch}{1.3}
\begin{tabular}{ |c|c|c|c|c|c|c|c|c|c|}
\hline
$ \l $ & $ 2 $ & $ 3 $ & $ 5 $ & $ 7 $ & $ 8 $ & $ 15 $ & $ 30 $ & $ 100 $ & $ 2019 $\\
\hline
$ \cK_{\l} $ & $ 1.0642 $ & $ 1.0408 $ & $ 1.0239 $ & $ 1.0170 $ & $ 1.0148 $ & $ 1.0079 $ & $ 1.0039 $ & $ 1.0011 $ & $ 1.000046 $\\
\hline
\end{tabular}
\caption{Evaluation of $\cK_{\l}$}
\label{table1}
\end{table}	

\section{Proof of Theorem 2.1} \label{Sec4}
Before proving the main theorem, we need the following lemmas. The proofs of the first two lemmas can be found in \cite{P}.
\begin{lemma} \label{Lem3.1}
Let $ V_n(x) = \sum _{j=0} ^{n} a_j \cos (j x) $ with the $a_j$ being random variables with Gaussian distribution $\mathcal{N}(0, \sigma^2) $. If $ a \in (0, 1/2 ) $ is fixed, then 
$
\E \left[ N_{n} \! \left(0 , n^{-a}\right) \right] 
= \O  (n^{1-a})  
$
as $ n \to \infty. $	
\end{lemma}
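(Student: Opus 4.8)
The plan is to invoke the Kac--Rice formula \eqref{Kac} on the short interval $\left(0,n^{-a}\right)$ with $\f_j(x)=\cos(jx)$, and then to estimate the resulting integrand crudely. Here the relevant quantities are $A(x)=\sum_{j=0}^n\cos^2(jx)$, $B(x)=-\sum_{j=0}^n j\cos(jx)\sin(jx)$ and $C(x)=\sum_{j=0}^n j^2\sin^2(jx)$. Two hypotheses of the Proposition must be checked. First, $A(x)>0$ on the interval, which will follow from the lower bound established below. Second, $V_n'$ must have a bounded number of zeros for every realization of the coefficients; this holds with $M=2n$, since the substitution $z=e^{ix}$ turns $2z^n V_n$ into a polynomial in $z$ of degree $2n$, so $V_n$ (and likewise the sine polynomial $V_n'$) has at most $2n$ zeros in $\ztp$.

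Once the formula applies, the first reduction is Cauchy--Schwarz: since $B^2(x)\ge 0$,
\[
\frac{\sqrt{A(x)C(x)-B^2(x)}}{A(x)}\le\sqrt{\frac{C(x)}{A(x)}},
\]
so it suffices to bound $C/A$. For the numerator I would use the trivial estimate $C(x)=\sum_{j=0}^n j^2\sin^2(jx)\le n^2\sum_{j=0}^n\sin^2(jx)=n^2\big((n+1)-A(x)\big)\le n^2(n+1)$. For the denominator I need a matching lower bound $A(x)\ge c\,n$, valid uniformly on the whole interval. Together these give $\sqrt{C/A}=\O(n)$, and integrating over a set of length $n^{-a}$ yields $\E\left[N_n\!\left(0,n^{-a}\right)\right]=\O(n^{1-a})$, as claimed.

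The heart of the matter is therefore the uniform lower bound on $A$. Writing $\cos^2(jx)=\tfrac12(1+\cos 2jx)$ and summing by the product-to-sum identity gives the closed form
\[
A(x)=\frac{2n+3}{4}+\frac{\sin((2n+1)x)}{4\sin x}.
\]
The naive estimate $\big|\sin((2n+1)x)/\sin x\big|\le 1/|\sin x|$ is useless near $x=0$, so the real work is to control the Dirichlet-type kernel $\sin((2n+1)x)/\sin x$ from below. Near the origin it behaves like $(2n+1)\,\mathrm{sinc}((2n+1)x)$, and since $\min_w \mathrm{sinc}(w)\approx-0.2172>-\tfrac14$, one expects its minimum over $x\in\zp$ to be at least $-c_0(2n+1)$ with $c_0<\tfrac14$; this forces $A(x)\ge c\,n$ for some $c>0$ and all large $n$, uniformly in $x$. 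I expect this step -- making the lower bound uniform across the full interval, in particular reconciling the $x\to 0^+$ regime (where $A\to n+1$) with the first negative lobe of the kernel -- to be the main obstacle, whereas the Cauchy--Schwarz reduction and the bound on $C$ are routine.

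For correlated Gaussian coefficients the same scheme goes through via the Gaussian version of Kac--Rice with $A(x)=\E[V_n(x)^2]$ and $C(x)=\E[(V_n'(x))^2]$; the Cauchy--Schwarz reduction is unchanged, and only the two elementary estimates on $A$ and $C$ must be re-examined in terms of the moment matrix $[\rho_{ij}]$.
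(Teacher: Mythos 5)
Your computation is sound for i.i.d.\ coefficients: the closed form $A(x)=\tfrac{2n+3}{4}+\tfrac{\sin((2n+1)x)}{4\sin x}$, the lower bound $A(x)\geqslant cn$ coming from the first negative lobe of the Dirichlet kernel, the Cauchy--Schwarz reduction to $\sqrt{C/A}=\O(n)$, and integration over an interval of length $n^{-a}$ all check out. But that is not the statement the paper needs. Lemma \ref{Lem3.1} carries \emph{no independence assumption} --- the paper stresses exactly this right after the statement ("regardless of independence of its coefficients") --- and it is invoked for $V_n$ with palindromic blocks, i.e.\ for a strongly dependent Gaussian ensemble. The version of the Kac--Rice formula quoted in the paper, and your identification of $A$, $B$, $C$ as unweighted sums of squares, are valid only in the i.i.d.\ setting.

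Your closing sentence, that for correlated coefficients "only the two elementary estimates on $A$ and $C$ must be re-examined," is precisely where the argument collapses. For the palindromic-block ensemble one has, on $(0,n^{-a})$,
\begin{equation*}
\E\!\left[V_n(x)^2\right]=m\l\left(1+u_m(\l x)\cos(m\l+r+1)x\right)\left(1+\ul(x)\cos(nx)\right)+\O(1),
\end{equation*}
and near points where $\cos(nx)=-1$ (for instance $x$ close to $\pi/n$, which lies well inside $(0,n^{-a})$) the first factor product is $\O(n^{-1})$, so the covariance $A(x)$ drops to $\O(1)$ instead of staying $\gtrsim n$. There $\sqrt{C/A}$ can be of order $n^{3/2}$ or worse, and the crude pointwise bound no longer integrates to $\O(n^{1-a})$; taming these spikes of the Kac--Rice integrand is exactly the delicate work the paper does on $E_\l$ (Lemma \ref{Lem3.3}) and deliberately avoids on $F_\l$. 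This is why the lemma is not proved by Kac--Rice at all: the proof imported from \cite{P} is a complex-analytic zero count in the style of Dunnage --- Jensen's formula on a disk of radius comparable to $n^{-a}$, with $\E\log\abs{V_n}$ on the outer circle bounded above by $\log\E\abs{V_n}=\O\!\left(n^{1-a}+\log n\right)$ and $\E\log\abs{V_n}$ at the center bounded below using only that the value of $V_n$ at a point is a one-dimensional Gaussian --- which is insensitive to the correlation structure. To rescue your route you would have to establish a lower bound on $\E[V_n(x)^2]$ separately for each dependent ensemble and then handle the points where it degenerates, which defeats the purpose of having this lemma as a universal, independence-free tool.
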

The convenience of the above lemma is that we obtain a fairly strong conclusion for the expected real roots of a random cosine polynomial regardless of independence of its coefficients.

\begin{lemma}  \label{Lem3.2}
Fix $ a \in (0,1/2) $ and $ p \in \N ,$ 
and let 
$ 
x \in [ m^{-a}, \pi / p -m^{-a} ] \ncomma m \in \N.
$
For $ \lambda = 0 , 1 , 2 ,$ we define 
\begin{align*}
& P_{ \lambda } (p,m,x) 
:= \sum _{j=0} ^{m-1} j^{ \lambda }\cos(2p j)x ,
& &
& & Q_{ \lambda } (p,m,x) := \sum _{j=0} ^{m-1} j^ { \lambda } \sin(2p j)x , 
\\
& R_{ \lambda } (p,m,x) 
:= \sum _{j=0} ^{m-1} j^{ \lambda }\cos(2p j+1)x 
& & \text{and} 
& & S_{ \lambda } (p,m,x) 
:= \sum _{j=0} ^{m-1} j^{ \lambda }\sin(2p j+1)x .
\end{align*}
Then 
\begin{align*}
& P_{ \lambda } (p,m,x), \ Q_{ \lambda } (p,m,x) , \
R_{ \lambda } (p,m,x) \ \text{and} \ S_{ \lambda } (p,m,x)
= \O  \! \left(  m^ { \lambda+a} \right) \asntoinfty .
\end{align*}
\end{lemma}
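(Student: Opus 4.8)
The plan is to recognize all four sums as the real and imaginary parts of a single complex exponential sum, dispose of the case $\lambda=0$ by the closed form of a geometric series, and then pass to $\lambda=1,2$ by Abel summation (summation by parts). First I would set $\theta := 2px$ and introduce the complex sum $T_\lambda := \sum_{j=0}^{m-1} j^\lambda e^{ij\theta}$, so that $P_\lambda = \operatorname{Re} T_\lambda$ and $Q_\lambda = \operatorname{Im} T_\lambda$. For the shifted sums I would observe that $e^{i(2pj+1)x} = e^{ix}\,e^{ij\theta}$, whence $R_\lambda + i S_\lambda = e^{ix}\,T_\lambda$ and therefore $\abs{R_\lambda},\,\abs{S_\lambda} \leqslant \abs{T_\lambda}$. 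Thus it suffices to prove the single estimate $\abs{T_\lambda} = \O(m^{\lambda+a})$ as $m \to \infty$ (equivalently $n\to\infty$, since $\l$ is fixed and $n=2\l m+r$), uniformly for $x$ in the stated interval.

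The one analytic ingredient is a uniform lower bound on $\abs{\sin(px)}$. Since $x \in [m^{-a}, \pi/p - m^{-a}]$ we have $px \in [pm^{-a}, \pi - pm^{-a}]$, and for $m$ large enough that $pm^{-a} < \pi/2$, Jordan's inequality $\sin u \geqslant (2/\pi)u$ on $[0,\pi/2]$ together with $\sin(\pi-u)=\sin u$ yields $\abs{\sin(px)} \geqslant \sin(pm^{-a}) \geqslant (2p/\pi)\,m^{-a}$. Consequently the partial sums $G_N := \sum_{j=0}^{N} e^{ij\theta} = (e^{i(N+1)\theta}-1)/(e^{i\theta}-1)$ satisfy $\abs{G_N} \leqslant 1/\abs{\sin(\theta/2)} = 1/\abs{\sin(px)} = \O(m^{a})$, uniformly in $N$ and in $x$ on the interval. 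Since $T_0 = G_{m-1}$, this already settles $\lambda = 0$.

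Finally, for $\lambda \in \{1,2\}$ I would apply summation by parts. Writing $e^{ij\theta}=G_j - G_{j-1}$ with $G_{-1}:=0$ and rearranging gives $T_\lambda = (m-1)^\lambda G_{m-1} - \sum_{j=0}^{m-2}\big((j+1)^\lambda - j^\lambda\big)G_j$. Because $(j+1)^\lambda - j^\lambda = \O(j^{\lambda-1})$ for $\lambda \in \{1,2\}$ and every $\abs{G_j}$ is bounded by the same $\O(m^{a})$, the triangle inequality produces $\abs{T_\lambda} = \O(m^{a})\big(m^\lambda + \sum_{j=0}^{m-2} j^{\lambda-1}\big) = \O(m^{\lambda+a})$, which is the desired bound. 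The only step that requires genuine care is the uniform-in-$x$ lower bound on $\abs{\sin(px)}$ of the previous paragraph, since it is precisely this that controls the geometric denominator across the whole interval; once it is secured the remainder is routine Abel-summation bookkeeping, with the polynomial weights $j^\lambda$ (which arise from differentiating the block structure in the Kac--Rice integrand) contributing exactly the extra factor $m^\lambda$.
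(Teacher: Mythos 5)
Your proof is correct, and all the steps check: the uniform lower bound $\abs{\sin(px)}\geqslant (2p/\pi)m^{-a}$ on $[m^{-a},\pi/p-m^{-a}]$ (for $m$ large enough that $pm^{-a}<\pi/2$) is the right key estimate, the geometric-sum bound $\abs{G_N}\leqslant 1/\abs{\sin(px)}$ is uniform in $N$, and the Abel-summation step with $\sum_{j=0}^{m-2}\bigl((j+1)^\lambda-j^\lambda\bigr)=(m-1)^\lambda=\O(m^\lambda)$ closes the argument. Note that the paper itself does not prove this lemma but defers to the author's companion paper \cite{P}; your complex-exponential formulation with summation by parts is the standard route (the cited source reaches the same $\O(m^{a})$ denominator bound via closed-form trigonometric identities of the type \eqref{d9}), and it has the minor advantage of handling all four sums and all three values of $\lambda$ in one uniform computation.
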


The following lemma is crucial in proving Theorem \ref{Thm2.1}.
\subpdfbookmark{Lemma 3.3}{Lem3.3}
\begin{lemma} \label{Lem3.3}
Fix $ \l \in \N $ and $ c \in (0,1), $ and let 
$ F_{\l} = \dis \bigcup_{i=0}^{[\l/2]} \, [i \pi /\l - n^{-c}, i \pi /\l + n^{-c} ] $ and
$ \ul (x) =\dfrac{\sin(\l x)}{\l \sin(x)}.$
Then,
\begin{align*}
& \int_{F_\l}  \sqrt{1+  \dfrac{3 \left(1-\ul^2(x)\right) }{  \left(1+ \ul(x) \cos(nx) \right) ^2 }} \, dx =  \O  \! \left(n^{-c}\right) \asntoinfty.
\end{align*}		
Note that the implied constant (in Landau's notation) only depends on $ \l $.
\end{lemma}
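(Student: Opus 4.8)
The plan is to exploit the coarse structure of $ F_\l $, which is a union of $ [\l/2]+1 $ intervals each of length $ 2n^{-c} $, so that $ |F_\l| = \O(n^{-c}) $. If the integrand were uniformly bounded on $ F_\l $ the estimate would be immediate; the whole point is therefore to control the integrand near the places where $ 1 + \ul(x)\cos(nx) $ is small. Since $ |\ul(x)|\le 1 $ on $ [0,\pi] $ with equality only at $ x=0,\pi $ (Markov's inequality, as recalled after Theorem \ref{Thm2.1}), and since $ \ul(i\pi/\l)=0 $ for $ i=1,\ldots,\l-1 $, the denominator is bounded below on every constituent interval except the one centered at the origin. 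Throughout I would use $ \sqrt{1+A}\le 1+\sqrt A $ $ (A\ge0) $ to bound the integrand by $ 1 + \sqrt3\,\sqrt{1-\ul^2(x)}\big/\bigl(1+\ul(x)\cos(nx)\bigr) $; the term $ 1 $ integrates to $ \O(n^{-c}) $, so it remains to estimate $ \int \sqrt{1-\ul^2(x)}\big/\bigl(1+\ul(x)\cos(nx)\bigr)\,dx $ over $ F_\l $. (The case $ \l=1 $ is trivial, as then $ \ul\equiv1 $ and the integrand is identically $ 1 $.)

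On each interval $ [\,i\pi/\l-n^{-c},\,i\pi/\l+n^{-c}\,] $ with $ 1\le i\le[\l/2] $, continuity of $ \ul $ and $ \ul(i\pi/\l)=0 $ furnish a fixed $ \delta_\l\in(0,1) $ with $ |\ul(x)|\le 1-\delta_\l $ on the interval for all large $ n $; there the denominator exceeds $ \delta_\l $ and the numerator is at most $ 1 $, so the integrand is $ \le 1/\delta_\l $ and the contribution is $ \O(n^{-c}) $. Everything thus reduces, by evenness of the integrand, to the interval $ E_0:=[0,n^{-c}] $ about the origin.

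This is the crux. On $ E_0 $ the integrand is genuinely unbounded: as $ x\to0 $ one has $ \ul(x)\to1 $, and where $ \cos(nx)=-1 $ the ratio behaves like $ \sqrt{(1+\ul)/(1-\ul)}\to\infty $, so no pointwise bound suffices and the oscillation of $ \cos(nx) $ must be used. I would first record the expansion $ 1-\ul(x)=(\l^2-1)x^2/6+\O(x^4) $, giving constants $ c_1,c_2>0 $ (depending on $ \l $) with $ c_1x^2\le 1-\ul^2(x)\le c_2x^2 $ on $ E_0 $ for large $ n $. Then I would split $ E_0 $ into the full periods $ I_k=[\,2\pi k/n,\,2\pi(k+1)/n\,] $, $ 0\le k\le K-1 $ with $ K\asymp n^{1-c} $, plus a short remainder handled like a generic $ I_k $. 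Writing $ u_k^-=\min_{I_k}\ul $ and $ u_k^+=\max_{I_k}\ul $, I would bound the numerator by $ \sqrt{1-(u_k^-)^2} $, split the denominator according to the sign of $ \cos(nx) $ (using monotonicity of $ \ul $), and apply the exact value $ \int_0^{2\pi} dt/(1+u\cos t)=2\pi/\sqrt{1-u^2} $ --- the $ 2\pi $-analogue of the identity used after Theorem \ref{Thm2.1} --- to each half. This yields $ \int_{I_k}\le (2\pi/n)\bigl(1+\sqrt{1-(u_k^-)^2}\big/\sqrt{1-(u_k^+)^2}\bigr) $. For $ k\ge1 $ the quadratic bounds give $ \sqrt{1-(u_k^-)^2}\big/\sqrt{1-(u_k^+)^2}\le \sqrt{c_2/c_1}\,(k+1)/k\le 2\sqrt{c_2/c_1} $, so each such period contributes $ \O(1/n) $ and the sum over the $ \O(n^{1-c}) $ periods is $ \O(n^{-c}) $.

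The only remaining point is the first period $ I_0=[0,2\pi/n] $, where $ u_0^+=\ul(0)=1 $ degenerates the ratio above and must be treated by hand. On the part of $ I_0 $ where $ \cos(nx)\ge0 $ the denominator is $ \ge1 $ and the numerator is $ \O(1/n) $, a contribution of $ \O(n^{-2}) $. On the part where $ \cos(nx)<0 $ I would use $ 1+\ul(x)\cos(nx)\ge \delta_0+\tfrac12(1+\cos(nx)) $ with $ \delta_0:=1-\ul(\pi/2n)\asymp n^{-2} $, substitute $ t=nx $, and estimate $ \int dt/(\delta_0+\cos^2(t/2))=\O(\delta_0^{-1/2})=\O(n) $ against the numerator bound $ \O(1/n) $, giving again $ \O(1/n) $. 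Summing the three pieces and using $ n^{-1}=\O(n^{-c}) $ for $ c\in(0,1) $ completes the estimate, with all implied constants depending only on $ \l $. The subtlety to flag is exactly this balance on $ I_0 $ between the vanishing numerator $ \sqrt{1-\ul^2} $ and the denominator collapsing near $ \cos(nx)=-1 $: it is the cancellation encoded in the Gradshteyn--Ryzhik integral that keeps the singularity from producing a contribution of order $ 1 $.
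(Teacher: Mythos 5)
Your proof is correct and follows essentially the same route as the paper's: reduce to $f_n=\sqrt{1-u_\ell^2(x)}\,/\,(1+u_\ell(x)\cos(nx))$, bound it uniformly on the constituent intervals away from the origin, and on $[0,n^{-c}]$ partition into $\O(n^{1-c})$ oscillation periods of $\cos(nx)$, showing each contributes $\O(1/n)$ by freezing $u_\ell$ in the denominator and invoking the exact value $\int_0^{\pi}dt/(1+a\cos t)=\pi/\sqrt{1-a^2}$ against a numerator of size $\O(k/n)$. The only organizational difference is that the paper uses half-periods together with the bound $u_\ell(x)\leqslant\cos(x)\leqslant 1-k^2/2n^2$ on the negative-cosine part (which makes the first period non-exceptional), whereas you treat the first full period by a separate, equally valid, hand computation.
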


\begin{proof}
The case $ \l =1 $ is trivial since $ u_1(x)=1 $. For a fixed $ \l \geqslant 2, $
let us call the integrand $ \g_{n},$ and define
\begin{equation*}
\f_{n} (x) :=   \frac{\sqrt{1-\ul^2(x)}}{1+ \ul(x) \cos(nx) } 
\, \ncomma x \in [0,\pi /2 ] .
\end{equation*}	
We note that
$
0 \leqslant  \g_{n} (x) \leqslant 1 + \sqrt{3} \, \f_{n} (x).
$
Thus, it is plausible to show that 
$
\int_{F_\l} \f_{n} (x) \, dx = \O  \! \left(n^{-c}\right) .
$
Markov's inequality (see Theorem 15.1.4 of \cite{RS}) gives that $ \abs{\ul(x)} \leqslant 1 $ on $ [0,\pi] $, and $ \abs{\ul(x)} = 1 $ only at $ x=0,\pi. $ Therefore, there exists $  0 < \etl <1 $ such that
\begin{equation} \label{d15}
\abs{\ul(x)} \leqslant \etl \ncomma x \in \left[ \pi/2\l,\pi- \pi/2\l\right] ,
\end{equation}	
which implies that $ 0 \leqslant \f_{n} (x) \leqslant 1/\left(1-\etl\right) $ on $ \left[ \pi/2\l,\pi/2\right] .$
Hence, it remains to show that
$
\int_{0}^{n^{-c}} \f_{n} (x) \, dx = \O  \! \left(n^{-c}\right) .
$
First, note that  
\begin{equation} \label{d16}
0 \leqslant \cos(\l x) \leqslant \ul(x) \leqslant \cos(x) \ncomma x \in \left[ 0, \pi/2\l\right] .
\end{equation}
Let $ n_1 = \ceil{n^{1-c} / \pi} ,$ and for $ 1 \leqslant k \leqslant n_1 $ define
\begin{align*}
& Q_{k}^{+} =\{ x \in [(k-1)\pi/n,k\pi/n] : \cos( n x) \geqslant 0 \}
& & \text{and}  
& Q_{k}^{-} =\{ x \in [(k-1)\pi/n,k\pi/n] : \cos( n x) \leqslant 0 \} .
\end{align*}
Since 
$ 0 \leqslant \f_{n} (x)  \leqslant 1 $ on $  Q_{k}^{+} , $ it is immediate that
$
\int_{Q_{k}^{+}} \f_{n} (x) \, dx \leqslant \pi /2n.
$
On the other hand, if $x \in Q_{k}^{-} $, then $ x \geqslant k\pi /2n $ and 
$ 
\sin(x/2)\geqslant x/\pi \geqslant k/ 2n 
$ 
which implies that
$ 
0 \leqslant \cos(x) \leqslant a_k := 1 - k^2/2n^2 <1 
$ 
on $ Q_{k}^{-} $. This combined with \eqref{d16} implies that
\begin{align*} 
\int_{Q_{k}^{-}}  \f_{n} (x) \, dx 
& \leqslant \int_{Q_{k}^{-}} \dfrac{ \sin(\l x) \, dx }{ 1+ \cos(x) \cos( n x) }
\leqslant \frac{k \l\pi}{n} \int_{Q_{k}^{-}} \dfrac{ dx }{ 1+ \cos(x) \cos( n x) } 
\leqslant \frac{k \l\pi}{n} \int_{Q_{k}^{-}} \dfrac{ dx }{ 1+ a_k \cos( n x)  } 
\\
& \leqslant \frac{k \l\pi}{n} \int_{(k-1) \pi / n}^{k \pi / n} \dfrac{dx}{ 1+ a_k \cos(nx)} 
= \frac{k \l\pi}{n^2} \int_{(k-1)\pi}^{k\pi} \dfrac{ dx }{ 1+ a_k \cos(x)  } .
\end{align*}
We use change of variables $ u = x-(k-1)\pi$ for the odd $ k ,$ similarly $ u = k\pi -x$ if $ k $ is even, and observe that
\begin{align} \label{d18}
& \int_{(k-1)\pi}^{k\pi} \dfrac{ dx }{ 1+ a_k \cos(x) } 
= \int_{0}^{\pi} \dfrac{ dx }{ 1+ a_k \cos(x) } = \frac{\pi}{\sqrt{1-a_k^2}},
\end{align}
where the latter equality is gained with help of \cite[3.613(1) on p. 366]{GR}. Hence, the last two relations imply that there exists $ N \in \N $ such that for all $ n \geqslant N $ and $ 1 \leqslant k \leqslant n_1 $,
\begin{align*}
\int_{Q_{k}^{-}}  \f_{n} (x) \, dx 
& \leqslant \frac{k \l \pi^2}{n^2 \sqrt{1-a_k^2}}
= \frac{k \l \pi^2}{n^2 \sqrt{\dfrac{k^2}{2n^2} \left( 2-\dfrac{k^2}{2n^2} \right)}}
= \frac{\sqrt{2} \l \pi^2}{n \sqrt{2-\dfrac{k^2}{2n^2}}}
\leqslant \frac{\sqrt{2} \l \pi^2}{n \sqrt{2-n^{-2c}}}
\leqslant \frac{\sqrt{2} \l \pi^2}{n} ,
\end{align*}
where the last two inequalities are obtained by the facts that $ n_1 \leqslant n^{1-c} $ and $n^{-2c} \to 0 $ respectively.
Therefore, there exists $ M_\l>0 $ such that, for all $ n \geqslant N ,$ we have
\begin{align} \label{d20}
\int_0^{n^{-c}} \f_{n} (x) \, dx
& \leqslant  \sum_{k=1}^{n_1} \int_{(k-1)\pi/n}^{k\pi/n} \f_{n} (x) \, dx
= \sum_{k=1}^{n_1} \int_{Q_{k}^{+} \cup Q_{k}^{-}} \f_{n} (x) \, dx  
\leqslant n^{1-c}  \left(\frac{\pi}{2n} + \frac{\sqrt{2} \l \pi^2}{n}\right) 
=  M_\l \, n^{-c} .
\end{align}
\end{proof}

\currentpdfbookmark{Proof of Theorem 2.1}{PF 2.1}
\begin{proof}[Proof of Theorem \ref{Thm2.1}] Fix $ a \in (0,1/2), $ and define $ \tilde{J} = \{ j : a_j \in \tilde{A}\} .$ 
For $ x \in \zp $, we see that
\begin{align*}
V_n(x)
& = \sum _{j=0} ^{n} a_j \cos (j x)
= \sum_{j=0}^{m-1} \sum_{k=0}^{\l -1} a_{\l j +k}  \left( \cos (\l j +k) x + \cos ( \l (2m-1-j) +r+1+k ) x \right) 
+ \sum_{j \in \tilde{J}} a_j \cos (j x) \\
& = \sum_{j=0}^{m-1} \sum_{k=0}^{\l -1} a_{\l j +k}  \left( \cos (\l j +k) x + \cos ( n -\l+1 -\l j +k ) x \right) 
+ \sum_{j \in \tilde{J}} a_j \cos (j x) ,
\end{align*} 
where 
$ 
\sum_{j \in \tilde{J}} a_j \cos (j x) = 0
$
if $ r=-1. $
For the sake of simplicity of our computations, we assume that $ n-\l $ is odd. 
Since either both $ \l j +k $ and $ n -\l+1 -\l j +k $ are odd or both are even, 
it is obvious that $ A(x) $, $ C(x) $ and $ B(x) $ are symmetric about $ x= \pi /2 ,$
which suggests  
$ \E [ N_{n} \zp ] = 2 \, \E [ N_{n} \zpt ]  $
due to the Kac-Rice Formula. 
It also follows from \cite[1.341(1,3) on p. 29]{GR} that, for all $ x \in \zp $,
\begin{align} \label{d9}
& \sum_{j=0}^{m-1}  \cos ( 2j + p ) x = \frac{\sin(mx) \cos( m-1+p)x }{\sin(x)}
& & \text{and}  
& \sum_{j=0}^{m-1}  \sin ( 2j + p ) x = \frac{\sin(mx) \sin( m-1+p)x }{\sin(x)} ,
\end{align}
especially
\begin{align} \label{d9-1}
& \sum_{j=0}^{m-1}  \cos ( 2\l j + p ) x = \frac{\sin(m\l x) \cos ((m-1)\l+p) x }{\sin(\l x)}.
\end{align}
Let $x \in E_{\l} = [0,\pi/2] \setminus F_{\l}$ with $ F_{\l} = \dis \bigcup_{i=0}^{[\l/2]}\, [i \pi /\l - n^{-a}, i \pi /\l + n^{-a} ] $. We observe that  
\begin{align*} 
A(x) 
& = \sum_{j=0}^{m-1} \sum_{k=0}^{\l -1} \left( \cos (\l j +k) x + \cos ( n -\l+1 -\l j +k ) x \right)^2 + \sum_{j \in \tilde{J}} \cos^2 (j x)\\
& = 4 \sum_{j=0}^{m-1} \cos ^2 \left(\frac{n -\l+1 -2\l j }{2}\right) \! x
\ \sum_{k=0}^{\l -1}  \cos ^2 \left(\frac{n -\l+1 +2k}{2}\right) \! x
+ \sum_{j \in \tilde{J}} \cos^2 (j x) \\
& = \sum_{j=0}^{m-1} \left( 1 + \cos(n -\l+1 -2\l j) x \right)  
\sum_{k=0}^{\l -1}  \left( 1+ \cos ( n- \l+1 +2k ) x \right)
+ \sum_{j \in \tilde{J}} \cos^2 (j x) \\
& = \sum_{j=0}^{m-1} \left( 1 + \cos(\l+r+1 +2\l j) x \right)  
\sum_{k=0}^{\l -1}  \left( 1+ \cos ( n- \l+1 +2k ) x \right)
+ \sum_{j \in \tilde{J}} \cos^2 (j x),
\end{align*}
where the last equality is obtained by replacing $ j $ with $ m-1-j $.
Next, applying \eqref{d9} and \eqref{d9-1} gives that
\begin{align*} 
A(x) 
& = \left( m + \frac{ \sin(m\l x) \cos(m\l+r+1)x }{\sin(\l x)} 
\right) \!
\left( \l + \frac{ \sin(\l x) \cos (nx)}{\sin(x)} \right) 
+ \sum_{j \in \tilde{J}} \cos^2 (j x)\\
& = m\l \left( 1 +  u_{m}(\l x) \cos(m\l+r+1)x 
\right)
\left( 1 +  \ul(x) \cos (nx)  \right)
+ \sum_{j \in \tilde{J}} \cos^2 (j x).
\end{align*}
Meanwhile, Markov's inequality shows $ \abs{u_{m}(\l x)} < 1 $ and $ \abs{\ul(x)} < 1 $ on $  E_{\l} $ implying that $ A(x)>0 $ on $ E_{\l} $. 
We note that
\begin{align*} %\label{d10}
A(x) 
& = \sum_{j=0}^{m-1} \sum_{k=0}^{\l -1} \left( \cos (\l j +k) x + \cos ( n -\l+1 -\l j +k ) x \right)^2 
+ \sum_{j \in \tilde{J}} \cos^2 (j x) \notag \\
& = \sum_{j=0}^{n} \cos^2 (j x)
+ 2 \sum_{j=0}^{m-1} \sum_{k=0}^{\l -1} \cos (\l j +k) x \cos ( n -\l+1 -\l j +k ) x \notag \\
& = \sum_{j=0}^{n} \cos^2 (j x)
+ \sum_{j=0}^{m-1} \sum_{k=0}^{\l -1} \cos ( n -\l+1 +2k ) x 
+ \sum_{j=0}^{m-1} \sum_{k=0}^{\l -1} \cos ( n -\l+1 -2\l j ) x \notag \\
& = \sum_{j=0}^{n} \cos^2 (j x)
+ \sum_{j=0}^{m-1} \sum_{k=0}^{\l -1} \cos ( n -\l+1 +2k ) x 
+ \sum_{j=0}^{m-1} \sum_{k=0}^{\l -1} \cos ( \l+r+1 +2\l j ) x ,
\end{align*}
where the last equality is obtained by replacing $ j $ with $ m-1-j $.
By Lemma \ref{Lem3.2}, it is clear that
\begin{align*} %\label{d11}
\sum_{j=0}^{n} \cos^2 (j x)
& = \frac{1}{2}\sum_{j=0}^{n} \left(1+ \cos (2j x)\right)
= \frac{n+1}{2} + \frac{P_0(1,n+1,x)}{2} 
= \frac{n}{2} +  \O  \! \left(n^{a}\right).
\end{align*}
Since $ \sec(x)= \O  \! \left(n^{a}\right) $ on $ E_\l $, \eqref{d9} implies that
\begin{align*} %\label{d12}
\sum_{j=0}^{m-1} \sum_{k=0}^{\l -1} \cos ( n-\l +1+2k ) x
& = m \sum_{k=0}^{\l -1} \cos ( n-\l +1+2k ) x 
= \frac{(n-r) \sin(\l x)\cos (nx)}{2\l \sin(x)} \notag \\
& = \frac{n \sin(\l x)\cos (nx)}{2\l \sin(x)} + \O  \! \left(n^{a}\right)
= \frac{n \ul(x) \cos(nx) }{2}  + \O  \! \left(n^{a}\right).
\end{align*}
The fact that $ \sec(\l x) = \O  \! \left(n^{a}\right) $ on $ E_\l $ along with \eqref{d9-1} helps us obtain
\begin{align*} %\label{d13}
&\sum_{j=0}^{m-1} \sum_{k=0}^{\l -1} \cos ( \l+r+1 +2\l j ) x 
= \frac{\l \sin(m\l x) \cos(m\l+r+1)x }{\sin(\l x)}
= \O  \! \left(n^{a}\right).
\end{align*}
Putting all the relations above together, we have
\begin{equation} \label{d21}
A(x) = \frac{n( 1 + \ul(x) \cos(nx) )}{2}  + \O  \! \left(n^{a}\right) \asntoinfty \ \text{and} \ x \in E_{\l}.
\end{equation}
In addition, we see 
\begin{align*} %\label{d22}
C(x) 
& = \sum_{j=0}^{m-1} \sum_{k=0}^{\l -1} \left( (\l j +k)\sin (\l j +k) x 
+ (  n -\l+1 -\l j +k ) \sin (  n -\l+1 -\l j +k ) x \right)^2 
+ \sum_{j \in \tilde{J}} j^2 \sin^2 (j x) \notag \\
& = \sum_{j=0}^{n} {j}^2 \sin ^2 (j x)
+ \sum_{j=0}^{m-1} \sum_{k=0}^{\l -1}  (\l j+k) ( n -\l+1 -\l j +k ) 
\left(\cos (n -\l+1 -2\l j ) x - \cos ( n -\l+1 + 2k ) x \right) .
\end{align*}
Note that 
\begin{align*} %\label{d23}
\sum_{j=0}^{n} {j}^2 \sin ^2 (j x)
& = \frac{1}{2}\sum_{j=0}^{n} {j}^2 - \frac{1}{2}\sum_{j=0}^{n} {j}^2 \cos(2j x)
= \frac{ n ( n+1 ) ( 2n+1) }{12} - \frac{P_{ 2 } (1,n+1,x)}{2} 
= \frac{ n ^3 }{6} + \O  \! \left(n^{2+a}\right) .
\end{align*}
Furthermore, the boundedness of $ \sum_{k=0}^{\l -1} k^{\lambda} \ncomma \lambda =1,2 ,$ together with Lemma \ref{Lem3.2} implies that
\begin{align*} 
& \sum_{j=0}^{m-1} \sum_{k=0}^{\l -1}  (\l j+k) ( n -\l+1 -\l j +k ) \cos (2j+1) \l x \notag \\
& = ( n -\l+1 ) \sum_{j=0}^{m-1} \sum_{k=0}^{\l -1}  (\l j+k) \cos (2j+1) \l x 
+ \sum_{j=0}^{m-1} \sum_{k=0}^{\l -1}  \left(k^2-\l^2j^2\right) \cos (2j+1) \l x
\notag \\
& = ( n -\l+1 ) \! \left(\l^2 R_{1} (1,m,\l x) + R_{0} (1,m,\l x) \sum_{k=0}^{\l -1} k \right)
+  R_{0} (1,m,\l x) \sum_{k=0}^{\l -1} k^2 - \l^3 R_{2} (1,m,\l x) 
= \O  \!  \left(n^{2+a}\right) .
\end{align*}
Similarly,
\begin{align*} 
& \sum_{j=0}^{m-1} \sum_{k=0}^{\l -1}  (\l j+k) ( n -\l+1 -\l j +k ) \sin (2j+1) \l x = \O  \!  \left(n^{2+a}\right) .
\end{align*}
The last two estimates combined with the identity $ \cos (n -\l+1 -2\l j ) x = \cos (n +1) x \cos (2j+1) \l x + \sin (n +1) x \sin (2j+1) \l x$ imply that
\begin{align*}  %\label{d24}
\sum_{j=0}^{m-1} \sum_{k=0}^{\l -1}  (\l j+k) ( n -\l+1 -\l j +k ) 
\cos (n -\l+1 -2\l j ) x
= \O  \!  \left(n^{2+a}\right) .
\end{align*}
With help of \eqref{d9}, we also have
\begin{align*} %\label{d25}
& \sum_{j=0}^{m-1} \sum_{k=0}^{\l -1}  (\l j+k) ( n -\l+1 -\l j +k ) 
\cos (n -\l+1 +2k ) x \notag \\
& = \sum_{j=0}^{m-1} \sum_{k=0}^{\l -1}  \left(( n -\l+1 ) \l j-\l^2j^2\right) \cos (n -\l+1 +2k ) x 
+ \sum_{j=0}^{m-1} \sum_{k=0}^{\l -1}  \left((n-\l+1)k+k^2\right) \cos (n -\l+1 +2k ) x
\notag \\
& = \left(\frac{( n -\l+1 ) \l (m-1)m }{2} - \frac{ \l^2 (m-1)m(2m-1) }{6}\right)  \frac{\sin( \l x)  \cos( n x) }{\sin(x)} \notag \\
& + m \sum_{k=0}^{\l -1}  \left((n-\l+1)k+k^2\right) \cos (n -\l+1 +2k ) x.
\end{align*}
The fact that $ \sum_{k=0}^{\l -1} k^{\lambda} \cos ( n -\l+1 +2k ) x =\O(1) \ncomma \lambda =1,2 ,$ helps us rewrite above as
\begin{align*} %\label{d26}
& \sum_{j=0}^{m-1} \sum_{k=0}^{\l -1}  (\l j+k) ( n -\l+1 -\l j +k ) 
\cos (n -\l+1 +2k ) x \notag \\
& = \left(\frac{n \l^2 m^2 }{2} - \frac{ \l^3 m^3 }{3}\right)  \frac{\sin( \l x)  \cos( n x) }{\l \sin(x)}
+ \O  \! \left(m^{2}\right) = \frac{n^3 \ul(x) \cos(nx)}{12} + \O  \! \left(n^{2}\right).
\end{align*}
Hence, 
\begin{equation} \label{d27}
C(x) = \frac{ n ^3 ( 2-   \ul(x) \cos(nx) ) }{12} 
+ \O  \! \left(n^{2+a}\right) \asntoinfty \ \text{and} \ x \in E_{\l}.
\end{equation}
We also have
\begin{align*} %\label{d28}
B(x) 
& = - \sum_{j \in \tilde{J}} j \sin(j x) \cos(j x)
- \sum_{j=0}^{m-1} \sum_{k=0}^{\l -1} \left( \cos (\l j +k) x + \cos ( n -\l+1 -\l j +k ) x \right) \notag \\
& \cdot	\left( (\l j +k)\sin (\l j +k) x + ( n -\l+1 -\l j +k ) \sin ( n -\l+1 -\l j +k ) x \right) \notag \\
& = - \sum_{j=0}^{n} j \sin(j x) \cos(j x) 
- \sum_{j=0}^{m-1} \sum_{k=0}^{\l -1} (\l j+k) \sin (\l j+k) x \cos ( n -\l+1 -\l j +k ) x \notag \\
& - \sum_{j=0}^{m-1} \sum_{k=0}^{\l -1} (n -\l+1 -\l j +k ) \sin ( n -\l+1 -\l j +k ) x \cos (\l j+k) x .
\end{align*} 
Converting $ \sin(\, \cdot \,)\cos(\, \cdot \,) $ in the last two sums into addition, and after simplification, gives us
\begin{align*} %\label{d29}
B(x) 
& = - \sum_{j=0}^{n} j \sin(j x) \cos(j x) 
-\frac{1}{2} \sum_{j=0}^{m-1} \sum_{k=0}^{\l -1} (n -\l+1 + 2k) \sin ( n -\l+1 + 2k ) x \notag \\
& -\frac{1}{2} \sum_{j=0}^{m-1} \sum_{k=0}^{\l -1} (n -\l+1 -2\l j) \sin (n -\l+1 -2\l j) x. 
\end{align*} 
It is clear that
\begin{align*} %\label{d30}
& \sum_{j=0}^{n} j \sin(j x) \cos(j x) 
= \frac{1}{2} \sum_{j=0}^{n} j \sin(2j x) = \frac{Q_{1} (1,n+1,x)}{2} = \O  \! \left(n^{1+a}\right).
\end{align*} 
Since $ \sum_{k=0}^{\l -1} (-\l+1 + 2k) \sin (n -\l+1 + 2k) x =\O(1) $, we observe that
\begin{align*} %\label{d31}
& \sum_{j=0}^{m-1} \sum_{k=0}^{\l -1} (n -\l+1 + 2k) \sin ( n -\l+1 + 2k ) x 
=  n m \sum_{k=0}^{\l -1} \sin ( n -\l+1 + 2k ) x + \O(m) \notag \\	
& = \frac{ n(n-r) \sin( \l x) \sin( n x) }{2 \l \sin(x)} + \O(m)
= \frac{n^2 \ul(x) \sin( n x) }{2} + \O(n) \com
\end{align*} 
with the second-to-last equality being obtained by \eqref{d9}.
We also note that
\begin{align*} 
\sum_{j=0}^{m-1} \sum_{k=0}^{\l -1} (n -\l+1 -2\l j) \sin (2j+1) \l  x
& = \l (n -\l+1) \sum_{j=0}^{m-1} \sin (2j+1) \l  x 
- 2\l^2 \sum_{j=0}^{m-1} j \sin (2j+1) \l  x\notag \\
& = \l (n -\l+1)  S_{0} (1,m,\l x) - 2\l^2 S_{1} (1,m,\l x)
= \O  \! \left(n^{1+a}\right) .
\end{align*} 
In much the same way,
\begin{align*} 
& \sum_{j=0}^{m-1} \sum_{k=0}^{\l -1} (n -\l+1 -2\l j) \cos (2j+1) \l  x
= \O  \! \left(n^{1+a}\right) .
\end{align*}
Thus, the last two estimates imply that
\begin{align*} %\label{d32}
& \sum_{j=0}^{m-1} \sum_{k=0}^{\l -1} (n -\l+1 -2\l j) \sin (n -\l+1 -2\l j) x
= \O  \! \left(n^{1+a}\right) .
\end{align*}
Therefore,
\begin{equation} \label{d33}
B(x) 
= - \frac{n^2 \ul(x) \sin( n x)}{4} + \O  \! \left(n^{1+a}\right) \asntoinfty \ \text{and} \ x \in E_{\l}.
\end{equation}
Hence, \eqref{d21}-\eqref{d33} imply that
\begin{align*} 
\sqrt{ A(x) C(x) - B^2(x) } 
& = \frac{n^2}{4 \sqrt{3} }  \sqrt{\left( 1+ \ul(x) \cos( n x) \right)^2 +3 \left( 1 - \ul^2(x) \right)}  \\
& + \O  \! \left(n^{1+a}\right) \asntoinfty \ \text{and} \ x \in E_{\l}.
\end{align*}
Recall that $ \E [ N_{n} \zp ] = 2 \, \E [ N_{n} \zpt ]  $
and 
$
\E [ N_n ( F_{\l} ) ] = \O  \! \left(n^{1-a}\right)  
$ 
by Lemma \ref{Lem3.1}. Therefore, the Kac-Rice Formula \eqref{Kac} combined with the fact that 
$ A(x)>0 $ on $ E_{\l} $
implies that
\begin{align*} 
\E [ N_{n} \ztp ] & =
4 \, \E [ N_{n} \zpt ]  
= \frac{4}{\pi} \displaystyle \int_{E_{\l}} \dfrac{ \sqrt{A(x)C(x)-B^2(x)} }{A(x)} \, dx
+ \O  \! \left(n^{1-a}\right) \notag \\ 
& = \frac{2n}{ \sqrt{3} } \left( \frac{1}{\pi} \int_{E_{\l}}
\sqrt{1+  \dfrac{3 \left(1-\ul^2(x)\right) }{  \left(1+ \ul(x) \cos(nx) \right) ^2 }} \, dx \right)  + \O  \! \left(n^{1-a}\right) \asntoinfty , 
\end{align*}
and Lemma \ref{Lem3.3} helps us write
\begin{align} \label{d35}
& \E [ N_{n} \ztp ] 
= \frac{2n}{ \sqrt{3} } \left( \frac{1}{\pi} \int_{0}^{\pi/2}
\sqrt{1+  \dfrac{3 \left(1-\ul^2(x)\right) }{  \left(1+ \ul(x) \cos(nx) \right) ^2 }} \, dx \right) + \O  \! \left(n^{1-a}\right) \quad\mbox{as } n\to\infty .
\end{align}
Note that allowing $ \l=1 $ gives us
$ \E [ N_{n} \ztp ] \sim n/\sqrt{3}, $ as in the work of Farahmand and Li \cite{FL}. 
For a fixed $ \l \geqslant 2 $ and $ n \in \N $, we define
\begin{equation*} 
I_{\l}(n) 
:= \frac{1}{\pi} \int_{0}^{\pi /2}
\sqrt{1+  \dfrac{3 \left(1-\ul^2(x)\right) }{  \left(1+ \ul(x) \cos(nx) \right) ^2 }} \, dx .
\end{equation*}
Our aim is to show that 
\begin{equation} \label{d36}
I_{\l}(n) = \cK_{\l} + \O  \! \left(n^{-1/3}\right) \asntoinfty .
\end{equation} 
Assume $ N\in \N $ as in \eqref{d20}, and let $ n'=[n/2] $, $ n \geqslant N $, and divide $ [0, \pi] $ into $ n $ subintervals. Thus,
\begin{align*}  %\label{d37}
I_{\l}(n) 
& = \frac{1}{\pi} \sum_{k=1}^{n'} \int_{(k-1) \pi /n }^{k \pi /n } \sqrt{1+  \dfrac{3 \left(1-\ul^2(x)\right) }{  \left(1+ \ul(x) \cos(nx) \right) ^2 }} \, dx 
+ \frac{1}{\pi} \int_{ n' \pi /n }^{ \pi / 2 } 
\sqrt{1+  \dfrac{3 \left(1-\ul^2(x)\right) }{  \left(1+ \ul(x) \cos(nx) \right) ^2 }} \, dx .
\end{align*}
We note that if $ n $ is even, the latter integral vanishes, and it could be made as small as $ \O  \! \left(n^{-1}\right)$ if $ n $ is odd, cf. Lemma \ref{Lem3.3}. So, for simplicity we let $ n $ be even.
The main tool in reaching our goal is to use the Riemann sum. Let $ t=nx $, and observe that
\begin{align*}  %\label{d38}
I_{\l}(n)
& =  \frac{1}{n \pi} \sum_{k=1}^{n/2} \displaystyle \int_{ (k-1) \pi }^{k \pi  }  \sqrt{1+  \dfrac{3 \left(1-\ul^2(t/n)\right) }{  \left(1+ \ul(t/n) \cos(t) \right) ^2 }} \, dt .
\end{align*}
In the interval $ [(k-1) \pi,k \pi ] \ncomma k=1, \ldots, n/2 $, we let
$ \zk = k \pi $ if $k$ is odd, and $ \zk = (k-1) \pi $ if $k$ is even.
Note that the $ \zk $ are to be used in the Riemann sum approximating $ I_\l(n). $ First, we would like to show that
\begin{equation*} %\label{d39}
I_{\l}(n)
=  \frac{1}{n \pi} \sum_{k=1}^{n/2} \displaystyle \int_{ (k-1) \pi }^{k \pi  }  \sqrt{1+  \dfrac{3 \left(1-\ul^2(\zk/n)\right) }{  \left(1+ \ul(\zk/n) \cos(t) \right) ^2 }} \, dt +  \O  \! \left(n^{-1/3}\right) \asntoinfty \com
\end{equation*}
or equivalently, we wish to prove that
\begin{align}  \label{d40}
& \frac{1}{n \pi} \sum_{k=1}^{n/2} \displaystyle \int_{ (k-1) \pi }^{k \pi  }  \Dk \, dt 
= \O  \! \left(n^{-1/3}\right) \asntoinfty \com
\end{align}
where
\begin{equation*} 
\Delta _{(k,n)} (t)
:=  \sqrt{1+  \dfrac{3 \left(1-\ul^2(t/n)\right) }{  \left(1+ \ul(t/n) \cos(t) \right) ^2 }} 
- \sqrt{1+  \dfrac{3 \left(1-\ul^2(\zk/n)\right) }{  \left(1+ \ul(\zk/n) \cos(t) \right) ^2 }} \com \ t \in [(k-1) \pi,k \pi].
\end{equation*}
Let $ n_1= \ceil{n^{1-b}/\pi} $ and $ n_2= [n/2\l] $, $ n \geqslant N $, and for our fixed $ a \in (0,1/2), $ let $ b =(1-a)/2 $.
If $  1 \leqslant k \leqslant n_1 $, replacing $ a_k $ with $ \ul(\zk/n) $ in \eqref{d18} suggests that
\begin{align*} %\label{d41}
\int_{ (k-1) \pi }^{k \pi  }  \sqrt{1+  \dfrac{3 \left(1-\ul^2(\zk/n)\right) }{  \left(1+ \ul(\zk/n) \cos(t) \right) ^2 }} \, dt 
& \leqslant \pi + \sqrt{3} \int_{ (k-1) \pi }^{k \pi} \frac{\sqrt{1-\ul^2(\zk/n)} \, dt}{1+ \ul(\zk/n) \cos(t) }  \notag \\
& =  \pi + \sqrt{3} \int_{0}^{\pi} \frac{\sqrt{1-\ul^2(\zk/n)} \, dt}{1+ \ul(\zk/n) \cos(t) } = (1 + \sqrt{3}) \pi .
\end{align*}
It follows from $ n_1 \leqslant n^{1-b} $ and the inequality above that
\begin{align*}  %\label{d42}
\frac{1}{n \pi} \sum_{k=1}^{n_1} \displaystyle \int_{ (k-1) \pi }^{k \pi  }  \sqrt{1+  \dfrac{3 \left(1-\ul^2(\zk/n)\right) }{  \left(1+ \ul(\zk/n) \cos(t) \right) ^2 }} \, dt 
& \leqslant (1 + \sqrt{3}) n^{-b} .
\end{align*}
Replacing $ c $ with $ b $ in \eqref{d20} also gives us
\begin{align*}  %\label{d43}
& \frac{1}{n} \sum_{k=1}^{n_1} \displaystyle \int_{ (k-1) \pi }^{k \pi  }  \sqrt{1+  \dfrac{3 \left(1-\ul^2(t/n)\right) }{  \left(1+ \ul(t/n) \cos(t) \right) ^2 }} \, dt 
=  \sum_{k=1}^{n_1} \int_{(k-1)\pi/n}^{k\pi/n} \sqrt{1+  \dfrac{3 \left(1-\ul^2(x)\right) }{  \left(1+ \ul(x) \cos(nx) \right) ^2 }} \, dx \notag \\
& = \sum_{k=1}^{n_1} \int_{(k-1)\pi/n}^{k\pi/n} \g_{n}(x) \, dx
\leqslant \pi n^{-b} 
+ \sqrt{3} \, \sum_{k=1}^{n_1} \int_{(k-1)\pi/n}^{k\pi/n} \f_{n}(x) \, dx
\leqslant \left(\pi +\sqrt{3} M_\l\right) n^{-b} .
\end{align*}
Thus, the last two relations together with the triangle inequality ensure that 
\begin{align}  \label{d44}
& \frac{1}{n \pi} \sum_{k=1}^{n_1} \displaystyle \int_{ (k-1) \pi }^{k \pi  }  \Dk \, dt = \O  \! \left(n^{-b}\right) \asntoinfty. 
\end{align}
On the other hand, if $ n_1+1 \leqslant k \leqslant n/2 $, it is clear that 
$
\Delta _{(k,n)}  \in C^{1} \! \left([(k-1) \pi,k \pi]\right).
$
Thus, by the Mean Value Theorem for Integrals there exists	$\ak \in \left((k-1) \pi,k \pi\right)$ such that 
\begin{align*}  %\label{d45}
& \abs{ \frac{1}{n \pi} \sum_{k=n_1+1}^{n/2} \displaystyle \int_{ (k-1) \pi }^{k \pi  } \Delta _{(k,n)}(t) \, dt } 
= \abs{ \frac{1}{n} \sum_{n_1+1}^{n/2} \Dkak } 
\leqslant \frac{1}{n } \sum_{n_1+1}^{n/2} \abs{\Dkak}. 
\end{align*} 
For $ n_1+1 \leqslant k \leqslant n/2 $, we define
\begin{equation*} 
\f_{\left(k,n\right)}(t) := \frac{\sqrt{1-\ul^2(t/n)}}{1+ \ul(t/n) \cos(\ak) } \,  \ncomma t \in [(k-1) \pi,k \pi].
\end{equation*}
It is easy to show that 
\begin{align*}  %\label{d46}
\abs{\Dkak}
& = \abs{ \sqrt{1+3 \f_{(k,n)}^2 (\ak)} -\sqrt{1+3  \f_{(k,n)}^2 (\zk)} \, } 
= 3 \abs{ \f_{(k,n)} (\ak) - \f_{(k,n)} (\zk)}
\notag \\
& \cdot \dfrac{\f_{(k,n)} (\ak) + \f_{(k,n)} (\zk)}
{\sqrt{1+3 \f_{(k,n)}^2 (\ak)} + \sqrt{1+3 \f_{(k,n)}^2 (\zk)}} 
\leqslant \sqrt{3} \abs{ \f_{(k,n)} (\ak) - \f_{(k,n)} (\zk)}. 
\end{align*}
Since $ 0 \leqslant \abs{\ul(t/n)}  <1$ on $ [(k-1) \pi,k \pi] $, $ n_1+1 \leqslant k \leqslant n/2 $, it is clear that $ 0 <\f_{\left(k,n\right)} \in C^{1} \! \left([(k-1) \pi,k \pi]\right) . $
Hence, by the Mean Value Theorem there exists 
$ \bk $ lying between $ \ak $ and $ \zk $ such that
\begin{align*}  %\label{d47-1}
\abs{ \f_{(k,n)} (\ak) - \f_{(k,n)} (\zk)}
& =  \abs{\zk -\ak} \cdot \abs{\f'_{\left(k,n\right)}(\bk)} 
\leqslant \pi \abs{\f'_{\left(k,n\right)}(\bk)}  .
\end{align*}
Therefore,
\begin{align} \label{d47}
\abs{ \frac{1}{n \pi} \sum_{k=n_1+1}^{n/2} \displaystyle \int_{ (k-1) \pi }^{k \pi  } \Delta _{(k,n)}(t) \, dt } 
& \leqslant \frac{ \sqrt{3} \pi}{n} \sum_{n_1+1}^{n/2} \abs{\f'_{\left(k,n\right)}(\bk)}.
\end{align}
We also note that
\begin{align} \label{d47-2}
\f'_{\left(k,n\right)}(\bk) 
& = \dfrac{ \ul^2(\bk/n) \cos(\bk/n)-\ul(\bk/n)\cos(\l \bk/n) }{n \sin(\bk/n) \sqrt{1-\ul^2(\bk/n)} \left(1+ \ul(\bk/n) \cos(\ak)\right) } \notag \\
& + \dfrac{ \sqrt{1-\ul^2(\bk/n)} \left(\ul(\bk/n)\cos(\bk/n)\cos(\ak) - \cos(\l \bk/n)\cos(\ak)\right)}{n \sin(\bk/n) \left(1+ \ul(\bk/n) \cos(\ak)\right)^2 } \notag \\
& = \frac{1}{n} \left( \dfrac{ \ul(\bk/n) \cos(\bk/n)-\cos(\l \bk/n) }{\sin(\bk/n) \sqrt{1-\ul^2(\bk/n)}} \right) \!
\left( \dfrac{ \ul(\bk/n) }{1+ \ul(\bk/n) \cos(\ak)} + \dfrac{ \left(1-\ul^2(\bk/n) \right) \cos(\ak) }{\left(1+ \ul(\bk/n) \cos(\ak)\right)^2} \right) \notag \\
& =: \frac{1}{n} \, \h_\l(\bk/n)
\left( \dfrac{ \ul(\bk/n) }{1+ \ul(\bk/n) \cos(\ak)} + \dfrac{ \left(1-\ul^2(\bk/n) \right) \cos(\ak) }{\left(1+ \ul(\bk/n) \cos(\ak)\right)^2} \right).
\end{align}
First, we assume that $ n_1+1 \leqslant k \leqslant n_2, $ which implies that  $n^{-b} \leqslant \bk/n \leqslant \pi/2\l . $ 
We note that
\begin{align*} 
\h_\l(x)
& = \dfrac{ \ul(x) \cos(x)-\cos(\l x) }{\sin(x) \sqrt{1-\ul^2(x)}}
= \dfrac{ \sin(\l x)\cos(x) -\l \sin(x)\cos(\l x) }{\l\sin^2(x) \sqrt{1-\ul^2(x)}} \\
& =\dfrac{ (1-\l)\sin(\l +1)x + (1+\l)\sin(\l -1)x }{2\l\sin^2(x) \sqrt{1-\ul^2(x)}}.
\end{align*}
Since $ (1-\l)\sin(\l +1)x + (1+\l)\sin(\l -1)x$ vanishes at $ x=0 $, and
\begin{align*} 
\dfrac{d}{dx}\left((1-\l)\sin(\l +1)x + (1+\l)\sin(\l -1)x\right)= 2(\l^2-1)\sin(\l x)\sin(x) \geqslant 0 \ncomma x \in \left[0,\pi/2\l\right],
\end{align*}
we see that $ \h_\l(x) $ is nonnegative on $ \left[0,\pi/2\l\right] $. Hence, We apply \eqref{d16}, and see that
\begin{align} \label{d47-3}
& 0 \leqslant \h_\l(x)
\leqslant \dfrac{ \cos^2(x)-\cos(\l x) }{\sin^2(x)} \leqslant \frac{\l^2-2}{2} \ncomma x \in \left[0,\pi/2\l\right],
\end{align}
where the last inequality follows from the facts that $ \dfrac{ \cos^2(x)-\cos(\l x) }{\sin^2(x)}  $ is a strictly decreasing function on $ \left[0,\pi/2\l\right] $ and $\dis \lim_{x\to 0^+}\dfrac{ \cos^2(x)-\cos(\l x) }{\sin^2(x)} = \frac{\l^2-2}{2}. $ It follows from \eqref{d47-2}, \eqref{d47-3} and \eqref{d16} that, for $ n_1+1 \leqslant k \leqslant n_2, $
\begin{align*} %\label{d47-4}
\abs{\f'_{\left(k,n\right)}(\bk) }
& \leqslant \frac{\l^2-2}{2n} 
\left( \dfrac{1}{ 1-\ul(\bk/n) } 
+ \dfrac{ 1-\ul^2(\bk/n)}{ \left(1-\ul(\bk/n) \right)^2 } \right) 
= \dfrac{ \left(\l^2-2\right) \left(2+\ul(\bk/n)\right)}{2n\left(1-\ul(\bk/n)\right)}  \notag \\
& \leqslant \frac{3\left(\l^2-2\right)}{2n\left(1 - \cos(\bk/n)\right)} 
= \frac{3\left(\l^2-2\right)}{4n\sin^2(\bk/2n)} 
\leqslant  \frac{3\left(\l^2-2\right)\pi^2}{4n \left(\bk/n\right)^2}
\leqslant  \frac{3\left(\l^2-2\right)\pi^2n^{-1+2b} }{4} \notag \\
& = \frac{3\left(\l^2-2\right)\pi^2n^{-a} }{4},
\end{align*}
and with help of \eqref{d47}, we have
\begin{align*}
\abs{ \frac{1}{n \pi} \sum_{k=n_1+1}^{n_2} \displaystyle \int_{ (k-1) \pi }^{k \pi  } \Delta _{(k,n)}(t) \, dt } 
& \leqslant \frac{ \sqrt{3} \pi}{n} \sum_{n_1+1}^{n_2} \abs{\f'_{\left(k,n\right)}(\bk)}
\leqslant \frac{3\sqrt{3}\left(\l^2-2\right)\pi^3 n^{-a} }{8\l} =C_\l \, n^{-a} .
\end{align*}
Thus,
\begin{align}  \label{d48}
& \frac{1}{n \pi} \sum_{k=n_1+1}^{n_2} \displaystyle \int_{ (k-1) \pi }^{k \pi  }  \Dk \, dt = \O  \! \left(n^{-a}\right) \asntoinfty. 
\end{align}
Next, we assume that $ n_2+1 \leqslant k \leqslant n/2, $ which implies that  $\pi/2\l \leqslant \bk/n \leqslant \pi/2 . $
Thus, \eqref{d15} and \eqref{d47-2} give that, for $ n_2+1 \leqslant k \leqslant n/2, $
\begin{align*} %\label{d48-1}
\abs{\f'_{\left(k,n\right)}(\bk) }
& \leqslant \dfrac{ \h_\l(\bk/n) \left(2+\ul(\bk/n)\right)}{n\left(1-\ul(\bk/n)\right)} 
\leqslant \frac{2\left(2+\etl\right)}{n \sin(\bk/n) \sqrt{1-\etl^2}\left(1-\etl \right)} 
\leqslant \frac{6\l}{n \left(1-\etl\right)^{3/2}} ,
\end{align*}
with the last inequality following from $ \sin(\bk/n) \geqslant 2\bk /n\pi \geqslant 1/\l $ for $ n_2+1 \leqslant k \leqslant n/2. $
Thus, with help of \eqref{d47}, we have
\begin{align*}
\abs{ \frac{1}{n \pi} \sum_{k=n_2+1}^{n/2} \displaystyle \int_{ (k-1) \pi }^{k \pi  } \Delta _{(k,n)}(t) \, dt } 
& \leqslant \frac{3\sqrt{3}\l\pi}{n \left(1-\etl\right)^{3/2}} =D_\l \, n^{-1} ,
\end{align*}
which implies that
\begin{align}  \label{d48-2}
& \frac{1}{n \pi} \sum_{k=n_2+1}^{n/2} \displaystyle \int_{ (k-1) \pi }^{k \pi  }  \Dk \, dt = \O  \! \left(n^{-1}\right) \asntoinfty. 
\end{align}
Therefore, \eqref{d44}, \eqref{d48} and \eqref{d48-2} give that
\begin{align*}  %\label{d49}
& \frac{1}{n \pi} \sum_{k=1}^{n/2} \displaystyle \int_{ (k-1) \pi }^{k \pi  }  \Dk \, dt = \O  \! \left(n^{-b}\right) + \O  \! \left(n^{-a}\right) 
 + \O  \! \left(n^{-1}\right) = \O  \! \left(n^{-b}\right) + \O  \! \left(n^{-a}\right) \asntoinfty. 
\end{align*}
Since $ b=(1-a)/2 \ncomma a \in (0,1/2) $, it is clear that the best estimate occurs when $ a=b=1/3. $ So 
\begin{align*}  
& \frac{1}{n \pi} \sum_{k=1}^{n/2} \displaystyle \int_{ (k-1) \pi }^{k \pi  }  \Dk \, dt = \O  \! \left(n^{-1/3}\right) \asntoinfty. 
\end{align*}
So far, we proved that \eqref{d40} holds. In other words,
\begin{equation*} 
I_{\l}(n)
=  \frac{1}{n \pi} \sum_{k=1}^{n/2} \displaystyle \int_{ (k-1) \pi }^{k \pi  }  \sqrt{1+  \dfrac{3 \left(1-\ul^2(\zk/n)\right) }{  \left(1+ \ul(\zk/n) \cos(t) \right) ^2 }} \, dt +  \O  \! \left(n^{-1/3}\right) \asntoinfty .
\end{equation*}
We use the same change of variables as in \eqref{d18} and observe that
\begin{align} \label{e14}
I_{\l}(n)
& =  \frac{1}{n \pi} \sum_{k=1}^{n/2} \displaystyle \int_{0}^{\pi}  \sqrt{1+  \dfrac{3 \left(1-\ul^2(\zk/n)\right) }{  \left(1+ \ul(\zk/n) \cos(t) \right) ^2 }} \, dt +  \O  \! \left(n^{-1/3}\right) \notag \\
& =  \frac{1}{\pi^2}  \int_{ 0 }^{ \pi  } \frac{\pi}{n} \sum_{k=1}^{n/2} \sqrt{1+  \dfrac{3 \left(1-\ul^2(\zk/n)\right) }{  \left(1+ \ul(\zk/n) \cos(t) \right) ^2 }} \,  dt +  \O  \! \left(n^{-1/3}\right)
\asntoinfty.
\end{align}
We next show that, for sufficiently large $ n $, 
\begin{align} \label{e1}
\int_{ 0 }^{ \pi  } \frac{\pi}{n} \sum_{k=1}^{n} \sqrt{1+  \dfrac{3 \left(1-\ul^2(\zk/n)\right) }{  \left(1+ \ul(\zk/n) \cos(t) \right) ^2 }} \, dt  
& = \int_{ 0 }^{ \pi  } \int_{ 0 }^{ \pi } \sqrt{1+  \dfrac{3 \left(1-\ul^2(s)\right) }{  \left(1+ \ul(s) \cos(t) \right) ^2 }} \, ds 
\, dt 
+  \O  \! \left( \frac{\log n}{n} \right) .
\end{align}
We define 
\begin{equation*}
\g(s,t) := \sqrt{1+  \dfrac{3 \left(1-\ul^2(s)\right) }{  \left(1+ \ul(s) \cos(t) \right) ^2 }} \, \ncomma (s,t) \in (0,\pi) ^2.
\end{equation*}
For any fixed $ t \in (0,\pi) $, let $ \g_{t}(s) := \g(s,t) \ncomma s \in (0,\pi) ,$ and $ I(t) := \int_{ 0 }^{ \pi  } \g_{t}(s) \, ds .$ For $ n'=n/2 $, we also define
\begin{align*} 
R_{n'}(\g_{t})
& :=  \frac{\pi}{n'} \sum_{k=1}^{n'} \g_{t} \left( (k-1/2) \pi/n' \right)
=  \frac{2 \pi}{n} \sum_{k=1}^{n'} \g_{t} ( (2k-1) \pi/n ) =  \frac{\pi}{n} \sum_{k=1}^{n'} 2 \g_{t} \left( (2k-1) \pi/n \right)
=  \frac{\pi}{n} \sum_{k=1}^{n} \g_{t} ( \zk/n ).
\end{align*}	
Therefore, proving \eqref{e1} requires showing
\begin{align} \label{e12}
\int_{ 0 }^{ \pi  } R_{n'}(\g_{t}) \, dt  
& = \int_{ 0 }^{ \pi  } I(t) \, dt  +  \O  \! \left( \frac{\log n}{n} \right) \asntoinfty.
\end{align}
We define
\begin{equation*}
\f(s,t) := \dfrac{ \sqrt{1-\ul^2(s)} }{ 1+ \ul(s) \cos(t)  } \, \ncomma (s,t) \in (0,\pi) ^2 \com
\end{equation*}
and for any fixed $ t \in (0,\pi) $ let $ \f_{t}(s) := \f(s,t) \ncomma s \in (0,\pi) .$ 
It is clear that 
$
0 < \sqrt{3} \f_{t}(s) \leqslant \g_{t}(s) \leqslant 1 + \sqrt{3} \f_{t}(s)
$ 
and
\begin{align*} %\label{e4}
\abs{\frac{d \g_t(s)}{d s}} 
& = \frac{3 \f_{t}(s)}{\g_{t}(s)}\abs{\frac{d \f_{t}(s)}{d s}} 
\leqslant \sqrt{3} \, \abs{\frac{d \f_{t}(s)}{d s}} .
\end{align*}
Thus, the sufficient condition for $ \g_{t}$, $ t\in(0,\pi) $, being integrable and a function of bounded variation over $ [0,\pi] $ is $ \f_{t} $ being integrable and of bounded variation over $ [0,\pi] $.
Note that any computation below which is valid for $ t \in (0, \pi/2) $ subsequently holds for  $ t \in (\pi/2,\pi) $.
It is trivial that $ \f_{\pi /2}(s) = \sqrt{1-\ul^2(s)} \leqslant 1 $ is integrable over $ [0,\pi] $. 
Fix $ t \in (0,\pi/2) $. It is clear that $ \f_{t}(s) \leqslant 1$ for all $ s\in [0,\pi/2\l] $, and $ \f_{t}(s) \leqslant 1/(1-\etl)$ for all $ s\in [\pi/2\l,\pi-\pi/2\l] $. Moreover, \eqref{d16} helps us write
\begin{align*} %\label{d54}
\int_{ \pi-\pi/2\l }^{ \pi  } \f_{t}(s) \, ds  
& = \int_{ 0 }^{ \pi/2\l  } \dfrac{ \sqrt{1-\ul^2(s)} \, ds}{ 1 \pm \ul(s) \cos(t)  } 
\leqslant  \int_{ 0 }^{ \pi/2\l  } \dfrac{ \sqrt{1-\ul^2(s)} \, ds}{ 1 - \ul(s) \cos(t)  }
\leqslant \int_{ 0 }^{ \pi/2\l } \frac{\sin(\l s) \, ds }{  1 - \cos(s) \cos(t) } \notag \\
& \leqslant \frac{\l \pi}{2} \int_{ 0 }^{ \pi/2\l } \frac{\sin(s) \, ds }{  1 - \cos(s) \cos(t) }
= \frac{\l \pi}{2\cos(t)} \log \left(\frac{1-\cos(\pi/2\l)\cos(t)}{1-\cos(t)}\right) < \infty ,
\end{align*}
therefore, $ \f_{t} $ is integrable over $ [0,\pi] $.
We also need to show that $ \f_{t} $ is a function of bounded variation over $ [0,\pi] $. 
We observe that
\begin{align*} %\label{e5}
\frac{d \f_{t}(s)}{d s} 
& = \left( \dfrac{ \ul(s) \cos(s)-\cos(\l s) }{\sin(s) \sqrt{1-\ul^2(s)}} \right)
\! \left( \dfrac{ \ul(s) }{1+ \ul(s) \cos(t)} + \dfrac{ \left(1-\ul^2(s) \right) \cos(t) }{\left(1+ \ul(s) \cos(t)\right)^2} \right) \notag \\
& = \h_\l(s)
\left( \dfrac{ \ul(s) }{1+ \ul(s) \cos(t)} + \dfrac{ \left(1-\ul^2(s) \right) \cos(t) }{\left(1+ \ul(s) \cos(t)\right)^2} \right) 
\ncomma (s,t) \in (0,\pi) ^2.
\end{align*}
It is also easy to check that for $ s \in (0, \pi/2] $,
\begin{align*} 
\h_\l(\pi-s) 
& = \begin{cases}
\h_\l(s) ,  & \text{if $\l$ is even,} \\
-\h_\l(s) , & \text{if $\l$ is odd.}
\end{cases}
\end{align*}
If $ t=\pi/2 $, \eqref{d15} and \eqref{d47-3} imply that
\begin{align*} %\label{e5-1}
\int_{ 0 }^{ \pi  } \abs{\frac{d \f_{\pi/2}(s)}{d s}} ds  
& = \int_{ 0 }^{ \pi  } \abs{\h_\l(s)\, \ul(s)} ds
\leqslant \int_{ 0 }^{ \pi  } \abs{\h_\l(s)} ds
= 2 \int_{ 0 }^{ \pi/2\l  } \abs{\h_\l(s)} ds
+  \int_{ \pi/2\l }^{ \pi-\pi/2\l  } \abs{\h_\l(s)} ds \notag \\
& \leqslant \frac{\left(\l^2-2\right)\pi }{2\l} + \int_{ \pi/2\l }^{ \pi-\pi/2\l  }  \dfrac{2\, ds}{\sin(s) \sqrt{1-\etl^2}}
\leqslant \frac{\left(\l^2-2\right)\pi }{2\l} + \frac{2\left(\l-1\right)\pi }{\sqrt{1-\etl}}<\infty.
\end{align*}
For $ t \in (0,\pi/2) $, we observe that
\begin{align} \label{e5-1-1}
& \abs{\frac{d \f_{t}(s)}{d s} }
\leqslant 2 \h_\l(s) \leqslant \l^2-2 \ncomma s \in [0,\pi/2\l],
\end{align}
and 
\begin{align} \label{e5-1-2}
\abs{\frac{d \f_{t}(s)}{d s}} 
& \leqslant \abs{\h_\l(s)}
\left(\dfrac{ 2+\abs{\ul(s)} }{ 1-\abs{\ul(s)}}\right) 
\leqslant \frac{6}{\sin(s) \sqrt{1-\etl^2} \left(1-\etl\right)} 
\leqslant \frac{6\l}{\left(1-\etl\right)^{3/2}} \ncomma s \in [\pi/2\l,\pi-\pi/2\l].
\end{align}
Therefore, it remains to show that $ \f_{t}$ is a function of bounded variation 
over $ [\pi-\pi/2\l,\pi] .$ By \eqref{d16} and \eqref{d47-3}, we have
\begin{align} \label{e2}
\int_{ \pi-\pi/2\l }^{ \pi  } \abs{\frac{d \f_{t}(s)}{d s}} ds  
& = \int_{\pi-\pi/2\l }^{ \pi  } \abs{\h_\l(s)}
\abs{\dfrac{ \ul(s) }{1+ \ul(s) \cos(t)} + \dfrac{ \left(1-\ul^2(s) \right) \cos(t) }{\left(1+ \ul(s) \cos(t)\right)^2}} ds \notag \\
& = \int_{0}^{ \pi/2\l } \abs{\pm\h_\l(s)}
\abs{\dfrac{ \pm\ul(s) }{1\pm \ul(s) \cos(t)} + \dfrac{ \left(1-\ul^2(s) \right) \cos(t) }{\left(1\pm \ul(s) \cos(t)\right)^2}} ds \notag \\
& \leqslant \int_{0}^{ \pi/2\l } \h_\l(s)
\left(\dfrac{ \ul(s) }{1- \ul(s) \cos(t)} + \dfrac{ \left(1-\ul^2(s) \right) \cos(t) }{\left(1- \ul(s) \cos(t)\right)^2} \right) ds \notag \\
& \leqslant \frac{\l^2-2}{2} \int_{0}^{ \pi/2\l } 
\left(\dfrac{ 1 }{1- \cos(s) \cos(t)} + \dfrac{ \sin^2(\l s) \cos(t) }{\left(1- \cos(s) \cos(t)\right)^2} \right) ds \notag \\
& \leqslant \frac{\l^2-2}{2} \int_{0}^{ \pi/2\l } 
\left(\dfrac{ 1 }{1- \cos(s) \cos(t)} + \dfrac{\l^2 \pi^2 \sin^2( s) \cos(t) }{4 \left(1- \cos(s) \cos(t)\right)^2} \right) ds \notag \\
& \leqslant \frac{\l^2-2}{2} \int_{0}^{ \pi } 
\left(\dfrac{ 1 }{1- \cos(s) \cos(t)} + \dfrac{\l^2 \pi^2 \sin^2( s) \cos(t) }{4 \left(1- \cos(s) \cos(t)\right)^2} \right) ds \notag \\
& = \frac{\l^2-2}{2} \left(\frac{\pi}{\sin(t)} + \frac{\l^2 \pi^2}{4} \int_{ 0 }^{ \pi  } \frac{ \cos(s) \, ds }{ 1- \cos( s ) \cos( t ) }\right) \com
\end{align}
where the last equality is obtained by \eqref{d18} and integration by parts.
Note that with help of \cite[3.613(1) on p. 366]{GR} and setting $ n=1, $ we have
\begin{align*} %\label{e3}
\int_{ 0 }^{ \pi  } \frac{ \cos(s) \, ds }{  1- \cos( s ) \cos( t )}
& = \frac{\pi(1-\sin(t))}{\sin(t)\cos(t)} ,
\end{align*}
which helps us rewrite \eqref{e2} as
\begin{align*} %\label{d59}
\int_{ \pi-\pi/2\l }^{ \pi  } \abs{\frac{d \f_{t}(s)}{d s}} ds  
& \leqslant \frac{\l^2-2}{2} \left(\frac{\pi}{\sin(t)} + \frac{\l^2 \pi^3(1-\sin(t))}{4\sin(t)\cos(t)} \right) <\infty \com
\end{align*}
which implies that the $ \f_{t} $, $ t \in (0,\pi/2] $, are functions of bounded variation over $ [0,\pi] $. Now, given the above facts about $ \g_{t} $, we can follow the same method as in Section 2 of \cite{C} and obtain 
\begin{align*} 
I(t) - R_{n'}(\g_{t}) = \frac{\pi}{n'} \int_{ 0 }^{ \pi  } \nu_{n'}(s) \, d\g_{t}(s)  
& = \frac{2\pi}{n} \int_{ 0 }^{ \pi  } \nu_{n'}(s) \, d\g_{t}(s)  \com
\end{align*}
where $ \nu_{n'}(s) :=\pi \, \rho_{n'}(s)-n's \ncomma s \in [0,\pi] ,$ is called the modified sawtooth function, and $ \rho_{n'}:=\sum_{ k = 1 }^{n'} \mathds{1}_{G_k} $ with $ \mathds{1}_{G_k} $ denoting the characteristic function on $ G_k=[(2k-1)\pi/n,\pi] $.
In particular, we have
\begin{align} \label{e10}
\abs{\int_{ 0 }^{ \pi  } \left(I(t) - R_{n'}(\g_{t})\right) dt} 
& \leqslant \frac{2\sqrt{3}\pi}{n}  \int_{ 0 }^{ \pi  } \int_{ 0 }^{ \pi  } \abs{\nu_{n'}(s)}  \abs{\frac{d\f_{t}(s)}{d s}} ds \, dt .
\end{align}
One can easily check that $ \abs{\nu_{n'}(s)} \leqslant \pi/2 \ncomma s \in [0,\pi] ,$
hence, it follows from \eqref{e5-1-1} and \eqref{e5-1-2} that
\begin{align} \label{e5-1-3}
\int_{0}^{\pi/2\l} \abs{\nu_{n'}(s)} \int_{ 0 }^{ \pi/2  } \abs{\frac{d \f_{t}(s)}{d s}} dt \, ds  
& \leqslant \frac{(\l^2-2)\pi^3}{8\l},
\end{align}
and
\begin{align} \label{e5-1-4}
\int_{\pi/2\l}^{\pi-\pi/2\l} \abs{\nu_{n'}(s)} \int_{ 0 }^{ \pi/2  } \abs{\frac{d \f_{t}(s)}{d s}} dt \, ds  
& \leqslant \frac{3(\l-1)\pi^3}{2\left(1-\etl\right)^{3/2}}.
\end{align}
Note that $ \abs{\nu_{n'}(\pi-s)} = \abs{\nu_{n'}(s)} , \ s \in [0,\pi] .$ Thus, the same reasoning as in \eqref{e2} gives
\begin{align*} %\label{e6}
& \int_{\pi-\pi/2\l}^{\pi} \abs{\nu_{n'}(s)} \int_{ 0 }^{ \pi/2  } \abs{\frac{d \f_{t}(s)}{d s}} dt \, ds  \notag \\
& \leqslant \frac{\l^2-2}{2} \int_{0}^{ \pi/2\l } \abs{\nu_{n'}(s)}
\int_{0}^{ \pi/2 } \left(\dfrac{ 1 }{1- \cos(s) \cos(t)} + \dfrac{\l^2 \pi^2 \sin^2( s) }{4 \left(1- \cos(s) \cos(t)\right)^2} \right) dt \, ds \notag \\
& \leqslant \frac{\l^2-2}{2} \int_{0}^{ \pi/2\l } \abs{\nu_{n'}(s)}
\int_{0}^{ \pi } \left(\dfrac{ 1 }{1- \cos(s) \cos(t)} + \dfrac{\l^2 \pi^2 \sin^2( s) }{4 \left(1- \cos(s) \cos(t)\right)^2} \right) dt \, ds \notag \\
& \leqslant \frac{\l^2-2}{2} \int_{0}^{ \pi/2\l } \abs{\nu_{n'}(s)}
\left(\frac{\pi}{\sin(s)} + \dfrac{\l^2 \pi^2 \sin^2( s)}{4 }
\int_{0}^{ \pi }\dfrac{ dt }{\left(1- \cos(s) \cos(t)\right)^2} \right) ds .
\end{align*}
We implement \cite[2.554(3) on p. 148]{GR} with $ n=2 \ncomma a=1 $ and $ b=-\cos(s) $ and have 
\begin{align*} %\label{e7}
& \int_{ 0 }^{ \pi  } \frac{  dt }{ \left( 1-\cos( s )\cos( t ) \right)^2 }  = \frac{1}{\sin^2(s)} \int_{ 0 }^{ \pi  } \frac{  dt }{ 1-\cos( s )\cos( t ) }
= \frac{\pi}{\sin^3(s)} ,
\end{align*}
where the latter equality is again achieved by \cite[3.613(1) on p. 366]{GR}.
Combining the last two relations, we observe that
\begin{align*} %\label{e8}
\int_{\pi-\pi/2\l}^{\pi} \abs{\nu_{n'}(s)} \int_{ 0 }^{ \pi/2  } \abs{\frac{d \f_{t}(s)}{d s}} dt \, ds
& \leqslant  \frac{(\l^2-2)(4\pi+\l^2\pi^3)}{8} \int_{ 0 }^{ \pi/2\l } \frac{\abs{\nu_{n'}(s)}}{\sin(s)} \, ds \notag \\
& \leqslant  \frac{(\l^2-2)(4\pi^2+\l^2\pi^4)}{16}
\left( \int_{ 0 }^{ \pi/n } \frac{\abs{\nu_{n'}(s)}}{s} \, ds 
+ \int_{ \pi/n }^{ \pi/2\l } \frac{\abs{\nu_{n'}(s)}}{s} \, ds\right),
\end{align*}
with the last inequality obtained by $ \sin(s) \geqslant 2s/\pi $.
Note that $ \abs{\nu_{n'}(s)} \leqslant \pi/2 \ncomma s \in [\pi/n,\pi/2\l] ,$ and $ \abs{\nu_{n'}(s)} = n's= ns/2 $ on $ [0,\pi/n] $, hence 
\begin{align} \label{e9}
& \int_{\pi-\pi/2\l}^{\pi} \abs{\nu_{n'}(s)} \int_{ 0 }^{ \pi/2  } \abs{\frac{d \f_{t}(s)}{d s}} dt \, ds
\leqslant  \frac{(\l^2-2)(4\pi^3+\l^2\pi^5)\left( \log n  -\log (2\l)+1\right)}{32}
.
\end{align}
It follows from \eqref{e5-1-3}-\eqref{e9} that there exist $ c_\l>0 $ and $ N \in \N $ such that, for all $ n \geqslant N $, 
\begin{align*} 
& \int_{ 0 }^{ \pi } \abs{\nu_{n'}(s)} \int_{ 0 }^{ \pi/2  } \abs{\frac{d \f_{t}(s)}{d s}} dt \, ds 
\leqslant c_\l \log n.
\end{align*}
Comparably, we can show that
\begin{align*} 
& \int_{ 0 }^{ \pi } \abs{\nu_{n'}(s)} \int_{ \pi/2 }^{ \pi  } \abs{\frac{d \f_{t}(s)}{d s}} dt \, ds 
\leqslant c_\l \log n.
\end{align*}
Therefore, \eqref{e10} and these two estimates imply that, for all $ n \geqslant N $, we have
\begin{align*} 
\abs{\int_{ 0 }^{ \pi  } \left(I(t) - R_{n'}(\g_{t})\right) dt} 
& \leqslant \frac{2\sqrt{3}\pi}{n}  \int_{ 0 }^{ \pi  } \int_{ 0 }^{ \pi  } \abs{\nu_{n'}(s)} \abs{\frac{d \f_{t}(s)}{d s}} ds \, dt \notag \\
& = \frac{2\sqrt{3}\pi}{n} \int_{ 0 }^{ \pi } \abs{\nu_{n'}(s)} \int_{ 0 }^{ \pi  } \abs{\frac{d \f_{t}(s)}{d s}} dt \, ds 
\leqslant  \frac{4\sqrt{3}\pi c_\l \log n}{n} \com
\end{align*}
where the interchange of integration order is justified by the Fubini-Tunelli Theorem.
Thus, \eqref{e12} holds, so does \eqref{e1}. 
Now, combining \eqref{e1} with the fact that $ \g(s,t) $ is symmetric about the line $ \langle\pi/2,\pi/2,r\rangle $, $ r \in \R $, gives us
\begin{align*} %\label{e13}
\int_{ 0 }^{ \pi  } \frac{\pi}{n} \sum_{k=1}^{n/2} \sqrt{1+  \dfrac{3 \left(1-\ul^2(\zk/n)\right) }{  \left(1+ \ul(\zk/n) \cos(t) \right) ^2 }} \, dt  
& = \int_{ 0 }^{ \pi  } \int_{ 0 }^{ \pi/2 } \sqrt{1+  \dfrac{3 \left(1-\ul^2(s)\right) }{  \left(1+ \ul(s) \cos(t) \right) ^2 }} \, ds 
\, dt 
+  \O  \! \left( \frac{\log n}{n} \right) .
\end{align*}
Finally, this very last estimate and \eqref{e14} lead us to 
\begin{align*} 
I_{\l}(n)
& =  \frac{1}{\pi^2}  \int_{ 0 }^{ \pi  } \frac{\pi}{n} \sum_{k=1}^{n/2} \sqrt{1+  \dfrac{3 \left(1-\ul^2(\zk/n)\right) }{  \left(1+ \ul(\zk/n) \cos(t) \right) ^2 }} \, dt 
+  \O  \! \left(n^{-1/3}\right) \notag \\
& = \frac{1}{\pi^2}  \int_{ 0 }^{ \pi  } \int_{ 0 }^{ \pi/2 } \sqrt{1+  \dfrac{3 \left(1-\ul^2(s)\right) }{  \left(1+ \ul(s) \cos(t) \right) ^2 }} \, ds \, dt
+  \O  \! \left( \frac{\log n}{n} \right)  +  \O  \! \left(n^{-1/3}\right)
\notag \\
& = \frac{1}{\pi^2}  \int_{ 0 }^{ \pi  } \int_{ 0 }^{ \pi/2 } \sqrt{1+  \dfrac{3 \left(1-\ul^2(s)\right) }{  \left(1+ \ul(s) \cos(t) \right) ^2 }} \, ds 
\, dt +  \O  \! \left(n^{-1/3}\right) 
= \cK_{\l} +  \O  \! \left(n^{-1/3}\right) \asntoinfty.
\end{align*}
Hence, \eqref{d36} holds as required. At last, \eqref{d36} together with setting $ a=1/3 $ in \eqref{d35} concludes the proof.
\end{proof}

\section*{Acknowledgment}
I am grateful to my advisor Igor Pritsker for leading me in the direction of this project and for all his helpful comments, suggestions and hints which significantly improved the paper. This manuscript is one of the main contributions to the author's Ph.D. dissertation under his supervision. The author would also like to thank the Vaughn Foundation, via Anthony Kable, for financial support.

\bibliographystyle{amsplain}

\end{document}